\documentclass[parskip=full, titlepage=false, twoside=false]{scrartcl} 
\usepackage[utf8]{inputenc} 
\usepackage[T1]{fontenc} 
\usepackage[english]{babel}
\usepackage[autostyle=true]{csquotes}
\usepackage{lmodern} 
\usepackage[pdfauthor={Jonathan Glöckle}, pdftitle={Initial data rigidity via Dirac-Witten operators}]{hyperref} 
\usepackage{amsmath, amsthm, amssymb, bm, bbm, mathrsfs, mathtools}
\usepackage{extarrows}
\usepackage{tabulary,tabularx}
\usepackage{graphicx}
\usepackage{tikz-cd}
\usepackage{enumerate}
\usepackage[capitalise]{cleveref}
\usepackage{bbold}

\theoremstyle{plain} 
\newtheorem{Satz}{Theorem} 
\newtheorem{Prop}[Satz]{Proposition} 
\newtheorem{Lem}[Satz]{Lemma}
\newtheorem{Kor}[Satz]{Corollary} 
\theoremstyle{definition} 
\newtheorem{Def}[Satz]{Definition} 
\newtheorem{Bsp}[Satz]{Example}
\newtheorem{Bem}[Satz]{Remark} 

\newtheorem{Add}[Satz]{Addendum}
\newtheorem{Set}[Satz]{Setup}

\crefname{Satz}{Theorem}{Theorems}
\crefname{Prop}{Proposition}{Propositions}
\crefname{Lem}{Lemma}{Lemmata}
\crefname{Kor}{Corollary}{Corollaries}
\crefname{Def}{Definition}{Definitions}

\let\div\relax

\newcommand{\N}{\mathbbm{N}} 
\newcommand{\Z}{\mathbbm{Z}} 
 
\newcommand{\R}{\mathbbm{R}} 
\newcommand{\C}{\mathbbm{C}}

\newcommand{\del}{\partial}
\newcommand{\lto}{\longrightarrow}

\newcommand{\blank}{\,\cdot\,}
\newcommand{\dvol}{\mathrm{dvol}}
\newcommand{\DW}{\overline{D}}
\newcommand{\Hyp}{\overline{\Sigma}}
\newcommand{\bDW}{\overline{A}}
\newcommand{\bch}{\mathcal{X}}

\newcommand{\upd}{\mathrm{d}}
\newcommand{\dotcup}{\mathbin{\dot\cup}}

\renewcommand{\emptyset}{\varnothing}
\renewcommand{\epsilon}{\varepsilon}
\renewcommand{\phi}{\varphi}
\renewcommand{\hat}{\widehat}

\DeclareMathOperator{\div}{div}

\DeclareMathOperator{\spn}{span}
\DeclareMathOperator{\tr}{tr}
\DeclareMathOperator{\id}{\mathbb{1}}

\DeclareMathOperator{\End}{End}

\DeclareMathOperator{\Cl}{\C l}
\DeclareMathOperator{\Spin}{Spin}
\DeclareMathOperator{\SO}{SO}
\DeclareMathOperator{\scal}{scal}

%Makro für nicht so enge Brüche
\newlength{\keyheightlength}
\setlength{\keyheightlength}{11pt} %Font-Größe

\newlength{\meineboxheight}
\newcommand{\erhoehebox}[2]{\settoheight{\meineboxheight}{#2}\addtolength{\meineboxheight}{#1}\vbox
	to \meineboxheight{\vfil\hbox{#2}}}

\newcommand{\bafrac}[3][.06\keyheightlength]{\mathchoice%
	{\frac{\textstyle #2}{\erhoehebox{#1}{$\textstyle #3$}}}%
	{\frac{\scriptstyle #2}{\erhoehebox{#1}{$\scriptstyle #3$}}}%
	{\frac{\scriptscriptstyle #2}{\erhoehebox{#1}{$\scriptscriptstyle #3$}}}%
	{\frac{\scriptscriptstyle #2}{\erhoehebox{#1}{$\scriptscriptstyle #3$}}}
}% end \newcommand

\begin{document}
\author{Jonathan Glöckle\thanks{Institutionen för Matematik, KTH Stockholm, Lindstedtsvägen 25, 100 44 Stockholm, Sweden, E-mail address: \href{mailto:jonathan.gloeckle.math@outlook.de}{jonathan.gloeckle.math@outlook.de}}}
\title{Initial data rigidity via Dirac-Witten operators}
\date{\today}
\maketitle

\begin{abstract}
	In this paper we prove an initial data rigidity result à la Eichmair, Galloway and Mendes \cite{Eichmair.Galloway.Mendes:2021} using Dirac operator techniques.
	It applies to initial data sets on spin bands that satisfy the dominant energy condition, a boundary condition for the future null expansion scalar and the $\hat{A}$-obstruction for positive scalar curvature on one of the boundary pieces.
	Interestingly, these bands turn out to carry lightlike imaginary $W$-Killing spinors, which are connected to Lorentzian special holonomy and moduli spaces of Ricci-flat metrics.
	We also obtain slight generalizations of known rigidity results on Riemannian bands.\\
	{\footnotesize 2020 \emph{Mathematics subject classification.} Primary: 53C21; Secondary: 53C27, 83C60.}
\end{abstract}

One of the main questions studied for positive scalar curvature is that of existence:
Given an $n$-manifold $M$, does it carry a metric of positive scalar curvature?
This question is answered by either providing a construction or finding a suitable obstruction.
Obstructionwise, two main answers have been found:
Firstly, Dirac operator techniques.
For instance, if $M$ is a closed spin manifold with non-vanishing $\hat{A}$-genus $\hat{A}(M)$, then with respect to any metric it carries a non-trivial Dirac-harmonic spinor and none of those can be of positive scalar curvature.
Secondly, minimal hypersurface techniques.
If~$M$ is a closed oriented manifold of dimension $2 \leq n \leq 7$ and assuming that there are cohomology classes $h_1, \ldots, h_{n-2} \in H^1(M,\Z)$ such that $[M] \cap (h_1 \cup \ldots \cup h_{n-2})$ is not in the image of the Hurewicz homomorphism $\pi_2(M) \to H_2(M, \Z)$, then $M$ does not carry a positive scalar curvature metric (cf.\ e.\,g.~\cite{Schick:1998}).
Actually, the upper dimension bound for the minimal hypersurface method can be improved to $11$.
This is a consequence of recent results by Chodosh, Mantoulidis, Schulze and Wang \cite{Chodosh.Mantoulidis.Schulze:2026,Chodosh.Mantoulidis.Schulze.Wang:2025p} building on earlier work by Nathan Smale \cite{Smale:1993} that allowed to raise the bound to $8$.

When trying to extend these obstruction results from positive to non-negative scalar curvature, we encounter rigidity phenomena.
Most prominently, if a closed manifold $M$ carries a non-negative scalar curvature metric $g$, but is known not to admit a positive scalar curvature metric, then $g$ must already be Ricci-flat (cf.\ e.\,g.~\cite[Lemma~5.2]{Kazdan.Warner:1975}).
Moreover, in the case where $M$ is spin with $\hat{A}(M) \neq 0$, the Riemannian manifold $(M,g)$ carries a non-trivial parallel spinor.
By studying closed Ricci-flat manifolds in more detail, further rigidity may be deduced.
For example, if additionally $b_1(M) \geq n$ (or $b_1(M) \geq n-1$ if $M$ is orientable), then by an argument of Bochner $(M,g)$ must even be isometric to a flat torus (cf.\ e.\,g.~\cite[Corollary~9.5.2]{Petersen:2016_3rd}).

When dealing with compact manifolds with boundary, even without closely looking at the geometry of Ricci-flat manifolds, there are surprisingly strong rigidity statements for non-negative scalar curvature metrics.
To obtain these, it is important to also assume appropriate boundary conditions.
Otherwise there are no obstructions to positive scalar curvature by Gromov's h-principle, and hence no rigidity in the sense of this article.
In~\cite{Baer.Hanke:2023} Bär and Hanke discuss and compare various boundary conditions.
A crucial role is played by the condition of mean convexity of the boundary.
This means that the mean curvature, defined as $H^g = \frac{1}{n-1}\tr(-\nabla \nu)$ for the inward-pointing unit normal $\nu$, is non-negative. 
Among other things, they show the following rigidity statement \cite[Thm.~19]{Baer.Hanke:2023}:
If $M$ is a compact connected spin manifold with boundary that has stably infinite $K$-area, then any Riemannian metric $g$ with $\scal^g \geq 0$ and $H^g \geq 0$ is Ricci-flat with $H^g \equiv 0$.
In particular, positive scalar curvature metrics with mean convex boundary are obstructed on such manifolds.
The theorem remains true for manifolds $M^\prime \times N$, where $M^\prime$ has stably infinite $K$-area as above and $N$ is a closed spin manifold with non-zero $\hat{A}$-genus; in particular, it applies to $M=[0,1] \times N$.

In this article, we will obtain a strengthening of this special case, where we do not a priori assume a cylindrical form, but just suppose that $M$ is what is sometimes called a \emph{band}.
This means that the boundary is decomposed into two pieces $\del M = \del_+ M \dotcup \del_- M$  as a topological disjoint union, so $\del_+ M$ and $\del_- M$ are unions of components of $\del M$.
Also, we obtain a more explicit description of the metrics in the rigidity case.
 
\begin{Kor} \label{Cor:Riem1}
	Let $(M,g)$ be a compact connected Riemannian spin manifold with boundary $\del M = \del_+ M \dotcup \del_- M$.
	Assume that
	\begin{itemize}
		\item $g$ has non-negative scalar curvature $\scal^g \geq 0$,
		\item the boundary is mean convex, i.\,e.~$H^g \geq 0$ with respect to the inward-pointing unit normal on $\del M$, and
		\item the $\hat{A}$-genus of $\del_- M$ is non-zero: $\hat{A}(\del_- M) \neq 0$.
	\end{itemize}
	Then $(M,g)$ is isometric to $(\del_-M \times [0,\ell], \gamma + \upd t^2)$ for a Ricci-flat metric $\gamma$ on $\del_- M$ admitting a non-trivial parallel spinor.
\end{Kor}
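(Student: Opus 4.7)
The plan is to derive the corollary by specialising the paper's main (Lorentzian) initial data rigidity theorem to the degenerate case in which the spacetime second fundamental form vanishes. In that case, the Dirac-Witten operator $\DW$ coincides with the Riemannian Dirac operator, the dominant energy condition reduces to $\scal^g \geq 0$, the future null expansion scalars of $\del_\pm M$ reduce (up to the universal sign convention) to the mean curvatures $H^g$, and a lightlike imaginary $W$-Killing spinor is nothing but a parallel spinor on $(M,g)$. Thus, granting the main theorem, its specialisation to this setting produces a non-trivial parallel spinor $\phi$ on $M$, together with a chirality-type boundary condition on $\del_- M$ relating $\phi$ to the inward unit normal $\nu$.

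From here the argument is purely Riemannian. Since $\nabla \phi = 0$, the standard Clifford-trace computation yields $\mathrm{Ric}(X) \cdot \phi = 0$ for every vector field $X$; as a parallel spinor has constant non-zero length, this forces $\mathrm{Ric}^g \equiv 0$ and hence $\scal^g \equiv 0$. Next, the boundary term in the underlying Schrödinger-Lichnerowicz integration must vanish pointwise, which immediately gives $H^g \equiv 0$ on $\del M$. A closer look at $\phi|_{\del_- M}$ upgrades this: the Gauss-type formula $\nabla^M_X \phi = \nabla^{\del_- M}_X \phi + \tfrac{1}{2} \mathrm{II}(X) \cdot \nu \cdot \phi$ combined with the parallelism of the restricted spinor $\phi|_{\del_- M}$ forces the full shape operator $\mathrm{II}$ on $\del_- M$ to vanish, not just its trace.

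For the splitting, propagate this along the normal geodesic flow from $\del_- M$: the Riccati equation $\nabla_\nu \mathrm{II} = -\mathrm{II}^2 - R(\blank, \nu)\nu$ for the shape operator of the equidistant hypersurfaces, together with the initial condition $\mathrm{II} = 0$ at $t=0$ and the vanishing of the curvature term (which follows from Ricci-flatness and the parallel spinor via $R(X,Y)\phi = 0$, applied in the normal slot), implies $\mathrm{II} \equiv 0$ along the whole flow. Hence the normal exponential map $\Phi \colon \del_- M \times [0, \ell_{\max}) \to M$ is a local isometry onto its image and extends as long as it avoids $\del_+ M$. Compactness and connectedness of $M$, together with $H^g \geq 0$ on $\del_+ M$ (which rules out focal points and forces transverse exit at $\del_+ M$), then yield a global isometry $\del_- M \times [0, \ell] \to M$ carrying $g$ to $\gamma + \upd t^2$, where $\gamma := g|_{\del_- M}$ is Ricci-flat and inherits the non-trivial parallel spinor $\phi|_{\del_- M}$.

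The principal obstacle is the upgrade from $H^g \equiv 0$ to $\mathrm{II} \equiv 0$ on $\del_- M$. This hinges on the precise boundary behaviour of $\phi$ encoded in the main theorem, and it is where the hypothesis $\hat{A}(\del_- M) \neq 0$ plays its true role: it guarantees that $\phi|_{\del_- M}$ is a non-trivial parallel spinor on $\del_- M$ and hence pointwise non-zero, so that $\mathrm{II}(X) \cdot \nu \cdot (\phi|_{\del_- M}) = 0$ really does imply $\mathrm{II} = 0$. Every subsequent step is then a routine combination of the Riccati/Jacobi analysis with compactness of $M$.
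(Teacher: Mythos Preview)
There is a genuine gap in your Riccati propagation step. You assert that the curvature term $R(\cdot,\nu)\nu$ vanishes because of Ricci-flatness together with the spinor identity $R(X,Y)\phi=0$, but that identity only yields $\mathrm{Ric}=0$ after a Clifford contraction; it gives no control over individual components of the curvature tensor and in particular does not kill the Jacobi operator along the normal geodesics. (Think of a Calabi--Yau factor: parallel spinor, Ricci-flat, yet full curvature non-zero.) Without $R(\cdot,\nu)\nu=0$ the Riccati equation does not propagate $\mathrm{II}\equiv 0$ away from $\del_-M$. A related circularity appears one step earlier: you invoke $\nabla^{\del_-M}$-parallelism of $\phi|_{\del_-M}$ to deduce $\mathrm{II}=0$ on $\del_-M$ via the Gauss formula, but for $\nabla^M\phi=0$ that formula reads $\nabla^{\del_-M}_X\phi=\tfrac12(\nabla_X\nu)\cdot\nu\cdot\phi$, so leafwise parallelism is \emph{equivalent} to $\mathrm{II}=0$, not an independent input. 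The hypothesis $\hat A(\del_-M)\neq 0$ does not supply this parallelism.

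The paper avoids both issues by never invoking the Riccati equation. From the lightlike $\overline\nabla$-parallel spinor it passes to the Riemannian Dirac current $U_\phi$, which for $k=0$ satisfies $\nabla_X U_\phi=u_\phi\,k(X,\cdot)^\sharp=0$: it is a \emph{parallel vector field} on $(M,g)$. Then $\tilde\nu=-U_\phi/|U_\phi|$ is a parallel unit normal to the foliation, so every leaf is totally geodesic and the splitting follows from the elementary ODE $\partial_t\gamma_t=2\gamma_t(\nabla\tilde\nu,\cdot)=0$; this is exactly the $h\equiv 0$ specialisation of the proof of \cref{Thm:Riem}. The missing ingredient in your outline is precisely this parallel vector field. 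If you wish to salvage your approach, note that $\mathrm{II}=0$ on $\del_-M$ can be obtained cleanly by differentiating the chirality condition $\tilde\nu\cdot e_0\cdot\phi=\phi$ tangentially (giving $(\nabla_X\tilde\nu)\cdot e_0\cdot\phi=0$, hence $\nabla_X\tilde\nu=0$), but for the propagation into the interior there is no substitute for the Dirac current.
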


A very similar statement was shown by Räde \cite[Thm.~2.14]{Raede:2023}.
The major difference is that he assumes a dimension bound and that any closed embedded hypersurface between $\del_- M$ and $\del_+ M$ does not admit positive scalar curvature so that minimal hypersurface (more precisely: $\mu$-bubble) techniques can be applied.
On the other hand he has no need of the spin and the $\hat{A}$-condition on $\del_- M$ that we use for our spinorial proof.

In our case, \Cref{Cor:Riem1} arises as a byproduct of studying rigidity for initial data sets.
By an \emph{initial data set} on a manifold $M$ we understand a pair $(g, k)$ consisting of a Riemannian metric $g$ and a symmetric $2$-tensor field $k$ on $M$.
They naturally appear in the following way:
If $M$ is a spacelike hypersurface in a time-oriented Lorentzian manifold $(\overline{M}, \overline{g})$, then there is an induced initial data set $(g, k)$ on $M$, where $g$ is the induced Riemannian metric on $M$ and $k$ is its second fundamental form with respect to the future-pointing unit normal $e_0$ (cf.~\eqref{eq:2FF}).
They serve as initial data for the Cauchy problem of general relativity, together with initial data for the matter fields under consideration.
In particular, the pair $(g, k)$ is all the initial data needed for the Cauchy problem in the vacuum case, where \emph{energy density} $\rho \coloneqq \frac{1}{2}(\scal^g + \tr^g(k)^2 - |k|_g^2)$ and \emph{momentum density} $j \coloneqq \div^g(k) - \upd \tr^g(k)$ vanish identically.
More generally, the initial data sets of physical interest are the ones that satisfy the \emph{dominant energy condition} $\rho \geq |j|_g$.
Note that if $k \equiv 0$, then $(g, k)$ satisfies the dominant energy condition if and only if $\scal^g \geq 0$.

Let now $(g,k)$ be an initial data set on a manifold $M$, potentially with boundary $\del M$.
We consider a co-oriented hypersurface $F$ in $M$.
The co-orientation will be given by a unit normal vector field $\tilde{\nu}$.
Following physics literature, we refer to the direction of $\tilde{\nu}$ as \emph{outgoing} and make the following definitions:
The \emph{future outgoing null second fundamental form} $\chi^+ \in \Gamma(\bigodot^2 T^*F)$ is defined by $\chi^+ = g(\nabla \tilde{\nu}, -) + k_{|F}$. 
Its trace is the \emph{future outgoing null expansion scalar} $\theta^+ = \tr^F(\chi^+) =  \tr^F(\nabla \tilde{\nu}) + \tr^F(k)$.

Geometrically, its significance is the following.
If $(g, k)$ is the induced initial data set on a hypersurface $M$ of a time-oriented Lorentzian manifold $(\overline{M}, \overline{g})$, then the second fundamental form of $F$ in $\overline{M}$ can be expressed in the normal frame of $F$ given by the null vector fields $\tilde{\nu} + e_0$ and $\tilde{\nu} - e_0$.
In this case the coefficient in front of $\tilde{\nu} - e_0$ is given by $-\frac{1}{2} \chi^+$.
Thus for any compactly supported variation  $(F_t)_{t \in (-\epsilon, \epsilon)}$ of $F = F_0$ in $\overline{M}$ with variation vector field $f \cdot (\tilde{\nu} + e_0)$, $f \in C^\infty_c(F)$, the first variation of the volume is given in terms of the null expansion scalar, namely it is equal to $\int_F f\div^F(\tilde{\nu} + e_0) \dvol = \int_F f\theta^+ \dvol$.

If $\theta^+ \equiv 0$, then $F$ is called a \emph{MOTS} (which stands for \emph{marginally outer trapped surface}).
Note that if $k \equiv 0$, then $\chi^+$ and $\theta^+$ reduce to the second fundamental form of $F$ in $M$ and (a multiple of) its mean curvature, respectively.
A MOTS is then just a minimal surface.

With these notions at hand, we can formulate our main theorem.
The role of $F$ will be played at first by the boundary pieces $\del_+ M$ and $\del_- M$, later by the leaves $F_t$ of a foliation extending $F_0 = \del_+ M$ and $F_\ell = \del_- M$.
Notice that, somewhat confusingly, on $\del_+ M$ the outgoing unit normal $\tilde \nu$ is chosen to be the inward-pointing one.

\begin{Satz} \label{Thm:Main}
	Let $M$ be a compact connected spin manifold with boundary $\del M = \del_+ M \dotcup \del_- M$ endowed with an initial data set $(g,k)$.
	Denote by $\tilde{\nu}$ the unit normal on $\del M$ that is inward-pointing along $\del_+ M$ and outward-pointing along $\del_- M$.
	Assume that
	\begin{itemize}
		\item $(g,k)$ satisfies the dominant energy condition $\rho \geq |j|_g$,
		\item the future null expansion scalar (with respect to $\tilde{\nu}$) satisfies $\theta^+ \leq 0$ on $\del_+ M$ and $\theta^+ \geq 0$ on $\del_- M$, and
		\item the $\hat{A}$-genus of $\del_- M$ is non-zero: $\hat{A}(\del_- M) \neq 0$.
	\end{itemize}
	Then there is a diffeomorphism $\Phi \colon \del_- M \times [0,\ell] \to M$ defining a foliation $F_t = \Phi(\del_- M \times \{t\})$ with $F_0 = \del_+ M$ and $F_\ell = \del_- M$.
	The leaves can be endowed with an induced initial data set, an induced spin structure and a unit normal $\tilde{\nu}$ pointing in the direction of growing $t$-parameter.
	The diffeomorphism can be chosen in such a way that the following holds for every leaf $F_t$:
	\begin{itemize}
		\item Its future null second fundamental form (with respect to $\tilde{\nu}$) vanishes, $\chi^+ = 0$, in particular it is a MOTS.
		\item It carries a non-trivial parallel spinor, in particular its metric is Ricci-flat.
		\item Its tangent vectors are orthogonal to $j^\sharp$ and $\rho + j(\tilde{\nu}) = 0$, in particular the dominant energy condition holds marginally: $\rho = |j|_g$.
	\end{itemize}
\end{Satz}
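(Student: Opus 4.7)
The plan is to adapt Witten's spinorial method to the initial-data band setting, following the tradition of Parker--Taubes, Bär--Hanke, and the more recent spinorial MOTS-rigidity literature. The central object is the Dirac-Witten operator
\[
\DW = \sum_i e_i \cdot \bDW_{e_i}, \qquad \bDW_X = \nabla_X + \tfrac{1}{2}\, k(X, \blank) \cdot e_0,
\]
acting on the restriction of the ambient Lorentzian spinor bundle to $M$ (with $e_0$ the Clifford symbol of a virtual future-pointing timelike normal). It satisfies the Schrödinger--Lichnerowicz--Witten formula $\DW^2 = \bDW^* \bDW + \tfrac{1}{2}(\rho + j \cdot e_0)$, whose zeroth-order term is non-negative precisely under the dominant energy condition.

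\textbf{Boundary value problem and index.} I would equip $\DW$ with local chiral (MIT-bag type) boundary conditions built from $\bch = \tilde\nu \cdot e_0$: opposite $\pm 1$-eigenspaces on $\del_+ M$ and $\del_- M$, chosen so that the integrated boundary term becomes $\tfrac12 \int_{\del M} |\theta^+| \, |\phi|^2$, with signs exploiting $\theta^+ \leq 0$ on $\del_+ M$ and $\theta^+ \geq 0$ on $\del_- M$. The resulting boundary value problem is Fredholm. A relative index computation --- via an invertible-double or cylindrical-end argument in the spirit of Bär--Ballmann --- identifies its index with $\pm \hat{A}(\del_- M) \neq 0$, producing a non-trivial $\phi \in \ker \DW$ satisfying the boundary conditions.

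\textbf{Rigidity via SLW.} For such $\phi$, integrating the SLW formula gives
\[
0 = \int_M |\bDW \phi|^2 + \tfrac{1}{2} \int_M \langle (\rho + j \cdot e_0) \phi, \phi \rangle + \tfrac{1}{2} \int_{\del M} |\theta^+| \, |\phi|^2,
\]
with all three terms non-negative. Hence $\bDW \phi \equiv 0$, the DEC saturates pointwise on the support of $\phi$, and $\theta^+ |\phi|^2 \equiv 0$ on $\del M$. Since $\bDW \phi = 0$ is a first-order linear system, unique continuation along any path forces $\phi$ to be nowhere zero, whence $\theta^+ \equiv 0$ on $\del M$ and the DEC saturates throughout $M$ in the specific form $\rho |\phi|^2 = -\langle j \cdot e_0 \cdot \phi, \phi\rangle$.

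\textbf{Constructing the foliation.} The remaining, and hardest, step is upgrading this pointwise rigidity to the claimed global product structure. From $\bDW \phi = 0$ one forms the Dirac current $V_\phi$, parallel on $M$ in the ambient Lorentzian sense, which decomposes as $V_\phi = |\phi|^2 e_0 + U$ for a tangential real vector field $U$; the saturation of the DEC forces $V_\phi$ to be lightlike, so $|U| = |\phi|^2$, and this is the lightlike imaginary $W$-Killing structure alluded to in the abstract. The $1$-form dual to $U$ turns out to be closed, and a real primitive $t$ (essentially determined by $|\phi|^2$) provides a smooth function whose level sets $F_t$ yield the desired diffeomorphism $\Phi \colon \del_- M \times [0,\ell] \to M$, with the chiral boundary conditions forcing $F_0 = \del_+ M$ and $F_\ell = \del_- M$. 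Restricting $\phi$ to a leaf and applying the Gauss--Weingarten formulas to $\bDW \phi = 0$ verifies that $\phi|_{F_t}$ is a genuine parallel spinor on $(F_t, g|_{F_t})$, that $\chi^+ = 0$, and that $j^\sharp \perp TF_t$ with $\rho + j(\tilde\nu) = 0$ along each leaf. The main obstacle is proving global regularity of $t$ --- namely that it has no interior critical points and meets $\del_\pm M$ transversally --- which will follow from the non-vanishing of $\phi$, the compactness and connectedness of $M$, and the boundary behaviour enforced by the chiral conditions.
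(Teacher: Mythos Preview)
Your overall strategy matches the paper's: Dirac--Witten operator with chirality boundary conditions, index equal to $\hat A(\del_- M)$ (the paper obtains this by recognising $\DW = D - \tfrac12\tr^g(k)\,e_0\cdot$ as a Callias operator and invoking Cecchini--Zeidler rather than an invertible-double argument), integrated Schr\"odinger--Lichnerowicz to get a nowhere-vanishing $\overline\nabla$-parallel spinor, then the Dirac current to build the foliation. Two steps, however, do not go through as written.

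First, lightlikeness of $V_\phi$ does \emph{not} follow from DEC saturation. Saturation only yields $(\rho e_0 - j^\sharp)\cdot\phi = 0$, which is vacuous when $\rho = j = 0$; in that case $V_\phi$ could perfectly well be timelike, $U_\phi$ could vanish somewhere, and the foliation argument collapses. Lightlikeness is forced instead by the \emph{boundary condition}: $\bch\phi = \phi$ on $\del M$ is equivalent to $(e_0 + \tilde\nu)\cdot\phi = 0$ there, so $V_\phi$ is a multiple of the null vector $e_0 + \tilde\nu$ along $\del M$, and parallelism of $\overline g(V_\phi,V_\phi)$ propagates this to all of $M$. This is precisely why $\del M \neq \emptyset$ is essential. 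Second, the closed $1$-form $U_\phi^\flat$ need not be exact a priori (nothing yet excludes $H^1(M)\neq 0$), so you cannot simply take ``a real primitive $t$''; and $|\phi|^2$ is in any case not such a primitive, since $\upd|\phi|^2 = k(U_\phi,\cdot)$ rather than $U_\phi^\flat$. The paper instead flows along $-U_\phi/|U_\phi|_g^2$ from $\del_+ M$ and devotes a separate argument to showing this flow is globally defined on $\del_+ M\times[0,\ell]$ and that its time-slices coincide with the leaves of $\ker U_\phi^\flat$; the coordinate $t$ is only defined afterwards. A minor related point: $\phi|_{F_t}$ is not itself parallel for the leaf connection --- one finds $\nabla^F_X\phi = -\tfrac12 k(X,\tilde\nu)\phi$ --- so the parallel spinor on each leaf is $\phi/|\phi|$.
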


Let us remark that $\hat{A}(\del_- M) \neq 0$ in particular implies that $\del_- M$ is non-empty.
The same is true for $\del_+ M$ since $\del_+ M$ is spin bordant to $\del_- M$ and thus $|\hat{A}(\del_+ M)| = |\hat{A}(\del_- M)|$.
We also see that the theorem is symmetric under exchanging $\del_+ M$ with $\del_- M$ (i.\,e.\ flipping the orientation of $\tilde \nu$) if at the same time $k$ is replaced by $-k$.

Initial data rigidity was first studied by Eichmair, Galloway and Mendes in their recent paper \cite{Eichmair.Galloway.Mendes:2021}.
They did so using minimal hypersurface (or rather: MOTS) techniques, whereas this article follows a spinorial approach to the problem.
Let us compare \cref{Thm:Main} with their result \cite[Thm.~1.2]{Eichmair.Galloway.Mendes:2021} in little more detail.
The main setup is the same:
They also consider initial data sets on compact connected manifolds with boundary satisfying the dominant energy condition and the boundary condition for $\theta^+$.
Then, there is an assumption that excludes positive scalar curvature on one of the boundary pieces.
In our case, this is provided by $\hat{A}(\del_- M) \neq 0$ and the observation that $\del_- M$ is spin.
In their case, it is what they call cohomology condition -- existence of classes $h_1, \ldots h_{n-1} \in H^1(\del_- M, \Z)$ with $h_1 \cup \ldots \cup h_{n-1} \neq 0$ -- together with the dimension bound $2 \leq n-1 \leq 6$.
As a last assumption some “weak niceness” of the boundary inclusion $\del_- M \hookrightarrow M$ is needed.
We need a spin structure of $\del_- M$ to extend to $M$; they require the so-called homotopy condition, i.\,e.\ that there is a continuous map $M \to \del_- M$ so that the composition $\del_- M \hookrightarrow M \to \del_- M$ is homotopic to the identity. 
The conclusions almost coincide:
Both theorems show $M \cong \del_- M \times [0, \ell]$ such that the canonical leaves are Ricci-flat manifolds with $\chi^+ = 0$ and $j^\sharp = -\rho \tilde{\nu}$.
In our theorem, we additionally obtain existence of a non-trivial parallel spinor on the leaves -- a feature we are going to discuss in more detail below.
Since Eichmair, Galloway and Mendes impose strong enough conditions on $\del_- M$ to make use of the argument by Bochner mentioned above, they are able to further conclude that the leaves are isometric to flat tori.

In a new article~\cite{Galloway.Mendes:2024}, Galloway and Mendes also discuss initial data rigidity for closed manifolds.
An analogue of \cite[Thm.~4.3]{Galloway.Mendes:2024} is the following immediate consequence of \cref{Thm:Main}:
If $(g,k)$ is an initial data set with $\rho \geq |j|_g$ on a closed connected spin manifold $M$ that contains a MOTS $F$ with $\hat{A}(F) \neq 0$, then $M$ is diffeomorphic to a mapping torus $F \times [0, \ell]/(x,\ell) \sim (f(x),0)$, $f \in \mathrm{Diff}(F)$, in such a way that the canonical leaves satisfy all the properties listed in \cref{Thm:Main}.

The author's interest in initial data rigidity arose from studying the space of initial data sets subject to the dominant energy condition on a fixed manifold $M$.
In \cite{Gloeckle:2023} and \cite{Gloeckle:2024}, it was shown that for many choices of $M$ this space has non-trivial homotopy groups and, more importantly, different connected components -- if the strict version $\rho > |j|_g$ of the dominant energy condition is considered.
For statements of increased physical relevance, this condition should be relaxed to the non-strict dominant energy condition $\rho \geq |j|_g$. 
For instance, in \cite{Ammann.Gloeckle:2023} Bernd Ammann and the author discuss that for certain manifolds $M$ any spacetime $(\overline{M}, \overline{g})$ containing $M$ as Cauchy hypersurface and satisfying the spacetime dominant energy condition cannot have both a big bang and a big crunch singularity.
The main step there is the mentioned passage from strict to non-strict inequality.
This is done by examining how rigid the equality case is.

More precisely, in the situation of \cite{Ammann.Gloeckle:2023}, a bit more is known about the equality case of interest:
There exists a spinor $\phi \not\equiv 0$ that is parallel with respect to the connection $\overline{\nabla}_X \phi = \nabla_X \phi + \frac{1}{2} k(X,-)^\sharp \cdot e_0 \cdot \phi$.
Here, $\phi$ is a section of the \emph{hypersurface spinor bundle} $\Hyp M \to M$ (cf.~\cref{Sec:HypSpin}), $e_0 \cdot \colon \Hyp M \to \Hyp M$ is a Clifford-antilinear involution it comes equipped with and $\nabla$ is induced by the Levi-Civita connection of $(M,g)$.
%The notions are such that if $M$ is contained in a Lorentzian spin manifold $(\overline{M},\overline{g})$ as spacelike hypersurface with induced initial data set $(g,k)$, then $\Hyp M = \Hyp \overline{M}_ {|M}$ for a spinor bundle $\Hyp \overline{M} \to \overline{M}$ and $\overline{\nabla}$ is its connection induced by the Levi-Civita connection of $(\overline{M},\overline{g})$.
These $\overline{\nabla}$-parallel spinors  (which are also known under the name \emph{imaginary $W$-Killing spinor}) come in two flavors, depending on whether their Lorentzian Dirac current $V_\phi = u_\phi e_0 - U_\phi \in \Gamma(TM \oplus \underline{\R}e_0)$  (cf.~\cref{Def:DirCurr}) is timelike or lightlike.
Especially the lightlike ones have attracted attention, since they play an important role in the study of Lorentzian special holonomy \cite{Baum.Leistner.Lischewski:2016}.

In the proof of \cref{Thm:Main}, one main step will be to show existence of a non-trivial $\overline{\nabla}$-parallel spinor.
As it turns out, non-emptiness of the boundary helps since a boundary condition forces the spinor to be lightlike.
From there, the other conclusions will be deduced by considering the foliation defined by $U_\phi$.
Since this intermediate result might be of interest in the future, we formulate it more explicitly.

\begin{Add} \label{Add:Main}
	Under the assumptions of \cref{Thm:Main}, the initial data set $(g,k)$ on $M$ carries a lightlike $\overline{\nabla}$-parallel spinor $\phi$.
	The foliation $(F_t)_{t \in [0,\ell]}$ in \cref{Thm:Main} may be constructed in such a way that the Riemannian Dirac current $U_\phi$ of $\phi$ is orthogonal\footnote{Actually, pointing in the direction of $-\tilde{\nu}$.} to the leaves $F_t$.
\end{Add}

One might ask whether even more rigidity can be deduced, similarly as in \cref{Cor:Riem1} where the metric stays the same on all leaves.
Nowadays, it is rather well understood that this is not possible.
In fact, Bernd Ammann, Klaus Kröncke and Olaf Müller gave a method for constructing lightlike $\overline{\nabla}$-parallel spinors on cylinders $N \times [0,L]$ in \cite{Ammann.Kroencke.Mueller:2021} providing many examples of initial data sets as in \cref{Thm:Main}.
Namely, from a Ricci-flat metric $\gamma_0$ on $N$ with a parallel spinor $\phi_0 \not\equiv 0$, a smooth curve $([\tilde\gamma_t])_{t \in [0,L]}$ in the moduli space of Ricci-flat metrics on $N$ starting at $[\tilde\gamma_0] = [\gamma_0]$ and a smooth function $f \colon [0,L] \to \R$ they construct an initial data set $(g, k) = (\gamma_t + \upd t^2, \frac{1}{2}\frac{\del}{\del t}\gamma_t + f(t) \upd t^2)$ and a lightlike $\overline{\nabla}$-parallel spinor $\phi$ on $N \times [0,L]$ such that $[\gamma_t] = [\tilde\gamma_t]$ for all $t \in [0,L]$, $\phi_{|N \times \{0\}} = \phi_0$ and $U_\phi$ is orthogonal to the canonical leaves.
While all these cylinders feature that $|\phi|^2 = |U_\phi|_g$ is constant along the leaves (this norm is given in terms of $|\phi_0|$ and $f$), the construction was recently improved by Bernd Ammann and Klaus Kröncke together with the author \cite{Ammann.Gloeckle.Kroencke:2026p} to get rid of this restriction on the norm. 
This leads to a complete classification of the initial data sets appearing in \cref{Thm:Main}.

We conclude our discussion with a rather general rigidity statement for Riemannian bands, which essentially follows from \cref{Thm:Main}.
Though its assumptions might seem rather technical, they nicely fit into the context of warped products.

\begin{Bsp}
	Consider a warped product $(\tilde{M}, \tilde{g}) = (N \times [0,L], w(s)^2 \tilde{\gamma} + \upd s^2)$ for a Riemannian manifold $(N, \tilde{\gamma})$ and a warping function $w \colon [0,L] \to \R$.
	Setting $h = \frac{w^\prime}{w}$, its scalar curvature is given by $\scal^{\tilde{g}} = (w \circ s)^{-2}\scal^{\tilde{\gamma}}-n(n-1)(h \circ s)^2 - 2(n-1) h^\prime \circ s$, where $s$ denotes the canonical projection on the $[0,L]$-factor.
	Moreover, the mean curvature of the leaf $N \times \{s\}$ with respect to the unit normal $\frac{\del}{\del s}$ is given by $-h(s)$.
	This means that the mean curvature of the boundary with respect to the inward-pointing unit normal is $H^{\tilde{g}} = -h(0)$ along $\del_+ \tilde{M} = N \times \{0\}$ and $H^{\tilde{g}} = h(L)$ along $\del_- \tilde{M} = N \times \{L\}$.
	Let us now assume that the scalar curvature of $(N, \tilde{\gamma})$ is non-negative, the warping function is log-concave, i.\,e.\ $\frac{\upd^2}{\upd s^2} \log(w) = h^\prime \leq 0$, and there exists a  $1$-Lipschitz map $\tilde{\Phi} \colon (M, g) \to (\tilde{M}, \tilde{g})$ sending $\del_+ M$ to $N \times \{0\}$ and $\del_- M$ to $N \times \{L\}$ such that $\scal^g \geq \scal^{\tilde{g}} \circ \tilde{\Phi}$ (on $M$) and $H^g \geq H^{\tilde{g}} \circ \tilde{\Phi}$ (along $\del M$).
	Then $h$ and $s \circ \tilde{\Phi} \colon M \to [0,L]$ satisfy the assumptions of \cref{Thm:Riem}.
\end{Bsp}

With this example in mind, \cref{Thm:Riem} may thus be read as a comparison result.

\begin{Satz} \label{Thm:Riem}
	Let $(M,g)$ be a compact connected Riemannian spin manifold of dimension~$n$ with boundary $\del M = \del_+ M \dotcup \del_- M$.
	Suppose $h \colon [0,L] \to \R$ is a smooth function with $h^\prime \leq 0$ and $s \colon M \to [0,L]$ is a smooth map with $s(\del_+ M) = \{0\}$ and $s(\del_- M) = \{L\}$ and such that $|\upd s|_g \leq 1$.
	Assume that
	\begin{itemize}
		\item the scalar curvature of $g$ is bounded below by $\scal^g \geq -n(n-1) (h \circ s)^2 - 2(n-1) h^\prime \circ s$,
		\item the mean curvature of the boundary with respect to the inward-pointing unit normal $\nu$ is bounded below by $H^g \geq -h(0)$ on $\del_+ M$ and $H^g \geq h(L)$ on $\del_- M$, and
		\item the $\hat{A}$-genus of $\del_- M$ is non-zero: $\hat{A}(\del_- M) \neq 0$.
	\end{itemize}
	Then there is an isometry $\Phi \colon (\del_-M \times [0,\ell], v(t)^2\gamma + \upd t^2) \to (M,g)$ with $\Phi(\del_- M \times \{0\}) = \del_+ M$ and $\Phi(\del_- M \times \{\ell\}) = \del_- M$, where $v \colon [0,\ell] \to \R$ is a smooth function and $\gamma$ is a Ricci-flat metric on $\del_- M$ admitting a non-trivial parallel spinor.
	More precisely, the composition $h \circ s$ is constant along the leaves of the canonical foliation and -- reinterpreting $h \circ s$ as function $[0,\ell] \to \R$ -- the warping function $v$ is determined (up to multiplication by a constant) by $\frac{v^\prime}{v} = h \circ s$.
	Moreover, $\upd s = (\Phi^{-1})^* \upd t$ wherever $h^\prime \circ s \neq 0$ and the inequalities for $scal^g$ and $H^g$ are equalities.
\end{Satz}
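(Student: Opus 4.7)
The plan is to manufacture an initial data set out of the Riemannian data and reduce to \cref{Thm:Main}. Set $k = -(h \circ s)\, g$ on $M$. Then $\tr^g k = -n (h \circ s)$ and $|k|_g^2 = n(h \circ s)^2$, giving $2\rho = \scal^g + n(n-1)(h \circ s)^2$; similarly $\div^g k = -(h' \circ s)\,\upd s$ and $\upd\tr^g k = -n(h' \circ s)\,\upd s$, whence $j = (n-1)(h' \circ s)\,\upd s$ and $|j|_g = -(n-1)(h' \circ s)\,|\upd s|_g$ (using $h' \leq 0$). Combining the scalar-curvature hypothesis with $|\upd s|_g \leq 1$ immediately yields $2\rho \geq -2(n-1)(h' \circ s) \geq 2|j|_g$, i.\,e.\ the dominant energy condition. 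Using $k_{|F} = -h(0)\, g_{|F}$ on $\del_+ M$ and $k_{|F} = -h(L)\, g_{|F}$ on $\del_- M$, a short computation rewrites the $H^g$-bounds as $\theta^+ \leq 0$ on $\del_+ M$ and $\theta^+ \geq 0$ on $\del_- M$.

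Applying \cref{Thm:Main} and \cref{Add:Main} -- the $\hat{A}$-assumption being shared -- one obtains a foliation $(F_t)_{t \in [0,\ell]}$ such that on every leaf $\chi^+ \equiv 0$, a non-trivial parallel spinor exists, $j^\sharp$ is normal to $TF_t$, and $\rho + j(\tilde\nu) = 0$. Vanishing of $\chi^+$ unfolds to $g(\nabla\tilde\nu, -) = -k_{|F} = (h\circ s)\, g_{|F}$, so $F_t$ is totally umbilical with umbilicity function $h \circ s$. The normality of $j^\sharp$ combined with $j = (n-1)(h'\circ s)\,\upd s$ forces $\upd s$ to vanish on $TF_t$ wherever $h' \circ s \neq 0$; hence $\upd(h\circ s) = (h' \circ s)\, \upd s$ vanishes on $TF_t$ \emph{unconditionally}, so $h \circ s$ is constant along every leaf. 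This is the decisive step: once $h \circ s$ is leafwise constant, so is the umbilicity function, and the warped-product reconstruction below proceeds without any case distinction on the zero set of $h'$. Moreover, $\rho = |j|_g$ together with the curvature bound forces $|\upd s|_g = 1$ where $h' \circ s \neq 0$, and as $\upd s$ and $\tilde\nu$ both point from $\del_+ M$ towards $\del_- M$ this gives the identity $\upd s = (\Phi^{-1})^* \upd t$ there.

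To conclude, replace the diffeomorphism of \cref{Thm:Main} by the normal exponential of $\tilde\nu$ emanating from $\del_+ M$: leafwise constant umbilicity keeps the leaves equidistant under this flow, giving (after identifying $\del_+ M$ with $\del_- M$ via the flow at time $\ell$) the desired $\Phi \colon \del_- M \times [0, \ell] \to M$ in which the metric takes the form $g = g_t + \upd t^2$ with $\del_t g_t = 2 (h \circ s)(t)\, g_t$. Integrating yields $g_t = v(t)^2 \gamma$, where $\gamma$ is the induced metric on $F_\ell = \del_- M$ (normalising $v(\ell) = 1$) and $v'/v = h \circ s$; Ricci-flatness of $\gamma$ and the existence of a non-trivial parallel spinor are inherited from \cref{Thm:Main}. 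The asserted equalities for $\scal^g$ and $H^g$ are then read off by substituting $\rho = |j|_g$ and $\theta^+ \equiv 0$ into the formulas derived in the first paragraph.
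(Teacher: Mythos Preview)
Your reduction to \cref{Thm:Main} via the initial data set $k=-(h\circ s)\,g$ is exactly the paper's approach, and your verifications of the dominant energy condition, the $\theta^+$-inequalities, the leafwise constancy of $h\circ s$, and the equalities for $\scal^g$, $H^g$, $|\upd s|_g$ all match the paper's argument.

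There is, however, one genuine gap. When you pass from the diffeomorphism of \cref{Thm:Main} to an isometry of the form $g_t+\upd t^2$, you invoke the normal exponential from $\del_+ M$ and justify it by saying that ``leafwise constant umbilicity keeps the leaves equidistant''. This implication is false in general: on the half-plane $\{x+y>0\}\subset\R^2$ with the metric $(x+y)^{-2}(\upd x^2+\upd y^2)$, the foliation by lines $y=\mathrm{const}$ has constant umbilicity factor $-1$ on every leaf, yet $\nabla_{\tilde{\nu}}\tilde{\nu}\neq 0$, so the leaves are not equidistant and the normal flow does not carry leaves to leaves. What you actually need is $\nabla_{\tilde{\nu}}\tilde{\nu}=0$, equivalently that $u_\phi=|\phi|^2$ is constant along each leaf. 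The paper supplies this in one line: for $X$ tangent to a leaf, $\upd u_\phi(X)=k(U_\phi,X)=-(h\circ s)\,g(U_\phi,X)=0$ since $U_\phi\perp X$. Once $u_\phi$ is leafwise constant, the flow of $-U_\phi/|U_\phi|_g^2$ from \cref{Thm:ProdStr} differs from that of $\tilde{\nu}=-U_\phi/u_\phi$ only by a reparametrisation in $t$ alone, and the warped-product form follows as you describe. Note that this step uses the pure-trace form of $k$ and is precisely what distinguishes \cref{Thm:Riem} from the general \cref{Thm:Main}, where $u_\phi$ need not be leafwise constant.
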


Again, there is a symmetry interchanging $\del_+ M$ with $\del_- M$.
This involves replacing $s$ by $\sigma \circ s$ and $h$ by $-h \circ \sigma$, where $\sigma \colon [0,L] \to [0,L]$ is the affine linear map switching the boundaries.
Furthermore, the assumption on $s$ can be slightly weakened.
\begin{Bem}
	\Cref{Thm:Riem} still holds true when the condition $|\upd s|_g \leq 1$ is only satisfied on the subset of $M$ where $h^\prime \circ s \neq 0$.
\end{Bem}

Comparable statements were already derived in articles by Cecchini and Zeidler \cite{Cecchini.Zeidler:2024} and Räde \cite{Raede:2023}.
The theorem of Cecchini and Zeidler \cite[Thm.~8.3 (cf.~also Thms.~9.1 and 10.2)]{Cecchini.Zeidler:2024} is also derived using spinor techniques.
It is more general in the sense that it also allows for non-trivial twist bundles $E \to M$ with the consequence that the index obstruction can be relaxed to $0 \neq \hat{A}(\del_- M, E) = \int_{\del_- M} \hat{A}(T\del_- M) \wedge \mathrm{ch}(E_{|\del_- M})$.
On the other hand, it is more restrictive as it requires the strict inequality $h^\prime < 0$.
In this case, the \emph{band width} $\mathrm{dist}^g(\del_+ M, \del_- M)$ plays a crucial role as the function $s$ needs to be $1$-Lipschitz.
Since a priori there does not need to exist a smooth $1$-Lipschitz function $s$ realizing the width, meaning $L = \mathrm{dist}^g(\del_+ M, \del_- M)$, it is also interesting to allow for non-smooth Lipschitz functions in the case $h^\prime < 0$.
Cecchini and Zeidler's article contains some arguments for this.

As already mentioned above, in Räde's work the $\hat{A}$- and the spin condition are replaced by conditions needed for a $\mu$-bubble argument to work.
His main theorem captures both the case $h^\prime < 0$ and the case $h^\prime \equiv 0$, but only in the latter case he is able to derive a rigidity statement comparable to \cref{Thm:Riem}.

Although the general case $h^\prime \leq 0$ seems to be new, the main applications of \cref{Thm:Riem} are the ones, where $h$ is such that the lower scalar curvature bound is a constant.
Then $h$ is subject to an ordinary differential equation and either $h^\prime < 0$ or $h^\prime \equiv 0$.
These functions and the associated corollaries are discussed in \cite[Sec.~2.A]{Raede:2023}.
We restrict our attention just to the cases $h \equiv 0$, which yields \cref{Cor:Riem1}, and $h \equiv -1$ yielding \cref{Cor:Riem2} below, where our theorem supersedes the result of Cecchini and Zeidler.
(Setting $h \equiv 1$ yields the statement of \cref{Cor:Riem2} with interchanged boundary pieces.)
Notice that a statement analogous to \cref{Cor:Riem1,Cor:Riem2} is also contained in the article by Eichmair, Galloway and Mendes \cite[Cor.~1.4]{Eichmair.Galloway.Mendes:2021}.

\begin{Kor} \label{Cor:Riem2}
	Let $(M,g)$ be a compact connected Riemannian spin manifold with boundary $\del M = \del_+ M \dotcup \del_- M$ of dimension $n$.
	Assume that
	\begin{itemize}
		\item the scalar curvature is bounded below by $\scal^g \geq -n(n-1)$,
		\item the mean curvature of the boundary with respect to the inward-pointing unit normal is bounded below by $H^g \geq 1$ on $\del_+ M$ and $H^g \geq -1$ on $\del_- M$, and
		\item the $\hat{A}$-genus of $\del_- M$ is non-zero: $\hat{A}(\del_- M) \neq 0$.
	\end{itemize}
	Then $(M,g)$ is isometric to $(\del_-M \times [0,\ell], e^{-2t}\gamma + \upd t^2)$ (with $\del_+ M$ corresponding to $\del_- M \times \{0\}$) for a Ricci-flat metric $\gamma$ on $\del_- M$ admitting a non-trivial parallel spinor.
\end{Kor}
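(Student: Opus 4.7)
The plan is to deduce \Cref{Cor:Riem2} as a direct application of \Cref{Thm:Riem} with the constant choice $h \equiv -1$ on some auxiliary interval $[0, L]$. With this $h$ we have $h' \equiv 0$, so the scalar curvature hypothesis $\scal^g \geq -n(n-1)(h \circ s)^2 - 2(n-1) h' \circ s$ of \Cref{Thm:Riem} collapses to exactly $\scal^g \geq -n(n-1)$, and the endpoint values $-h(0) = 1$ and $h(L) = -1$ reproduce the mean convexity bounds of \Cref{Cor:Riem2} on $\del_+ M$ and $\del_- M$. The spin and $\hat{A}$-hypotheses carry over verbatim.

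The first step is to produce an auxiliary smooth map $s \colon M \to [0, L]$ with $s(\del_+ M) = \{0\}$ and $s(\del_- M) = \{L\}$. Since $\del_+ M$ and $\del_- M$ are disjoint closed subsets of the compact manifold $M$, such an $s$ exists by a standard partition-of-unity construction. The only non-trivial point is the $1$-Lipschitz condition $|\upd s|_g \leq 1$ in the hypothesis of \Cref{Thm:Riem}: by the Remark following that theorem, it need only be verified on the set $\{h' \circ s \neq 0\}$, which is empty for $h \equiv -1$. In particular no a priori bound on $\mathrm{dist}^g(\del_+ M, \del_- M)$ is required, and both $L$ and $s$ serve purely as bookkeeping devices — this recognition is essentially the only obstacle to applying the theorem.

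Applying \Cref{Thm:Riem} then yields an isometry $\Phi \colon (\del_- M \times [0, \ell], v(t)^2 \gamma + \upd t^2) \to (M, g)$ that matches the boundary pieces as claimed, with $\gamma$ Ricci-flat admitting a non-trivial parallel spinor, and with warping function $v$ determined up to a positive multiplicative constant by $v'/v = h \circ s \equiv -1$. Integrating gives $v(t) = v(0)\,e^{-t}$. Absorbing the constant factor $v(0)^2 > 0$ into $\gamma$ — an operation that preserves both Ricci-flatness and the parallel spinor, since constant rescaling of a Riemannian metric leaves the Levi-Civita connection unchanged — puts the product metric into the desired form $e^{-2t}\gamma + \upd t^2$. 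No substantive difficulty arises beyond the small observation about the Lipschitz condition noted in the previous paragraph.
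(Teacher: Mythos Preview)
Your proposal is correct and matches the paper's approach exactly: the paper states (just before \Cref{Cor:Riem2}) that this corollary follows from \Cref{Thm:Riem} by taking $h \equiv -1$, and you have spelled out precisely those details---including the use of the Remark to bypass the Lipschitz condition on $s$ and the absorption of $v(0)^2$ into $\gamma$---that the paper leaves implicit.
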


Let us finally discuss the strategy of the proof of \cref{Thm:Main} and the structure of this article.
The main tool we are going to use is the \emph{Dirac-Witten operator} $\DW$, which lives on the hypersurface spinor bundle $\Hyp M \to M$ mentioned above.
This bundle is best explained if $M$ is assumed to sit as a spacelike hypersurface in a time-oriented Lorentzian spin manifold $(\overline{M}, \overline{g})$.
In this case, $\Hyp M \to M$ is just the restriction to $M$ of a spinor bundle on $\overline{M}$.
In particular, it carries an involution $e_0 \cdot$ induced by Clifford multiplication with the future unit normal on $M$ and a connection $\overline{\nabla}$ induced by the Levi-Civita connection of $(\overline{M}, \overline{g})$.
In \cref{Sec:HypSpin}, we discuss how to construct this bundle and its additional structures intrinsically, from the initial data set $(g, k)$ and a spin structure on $M$ alone.
The Dirac-Witten operator $\DW$ is the Dirac operator of $\Hyp M$ with respect to its connection $\overline{\nabla}$.
The hypersurface spinor bundle also carries a connection $\nabla$ induced from the Levi-Civita connection of $(M,g)$ and there is a Dirac operator $D$ associated to it.
As it turns out, they are related via $\DW = D - \frac{1}{2} \tr^g(k) e_0 \cdot$.
This means that $\DW$ is a Callias operator in the sense of Cecchini and Zeidler.
Putting \emph{chirality boundary conditions} $\tilde{\nu} \cdot \phi = -e_0 \cdot \phi$ on the sections of $\Hyp M \to M$, we are able to invoke their analytical results.
This is done in \cref{Sec:DWCallias}.
As a result, we obtain existence of non-trivial Dirac-Witten harmonic spinors $\phi$ subject to chirality boundary conditions if $\hat{A}(\del_- M) \neq 0$.
These spinors are then further studied in \cref{Sec:KerDW} using an integrated version of the Schrödinger-Lichnerowicz formula $\DW^2 = \overline{\nabla}^*\overline{\nabla} + \frac{1}{2}(\rho - e_0 \cdot j^\sharp \cdot)$.
If the dominant energy condition holds and the inequalities for $\theta^+$ are satisfied along $\del M$, we are able to conclude that $\phi$ is a lightlike $\overline{\nabla}$-parallel spinor.
We continue by studying the foliation defined by its Riemannian Dirac current $U_\phi$.
Doing so, we prove the main theorems -- up to the observation that this foliation is actually of cylindrical type $\del_- M \times [0, \ell]$.
The remaining piece is provided in \cref{Sec:ProdStr}, where we look at the flow of $-\bafrac{U_\phi}{|U_\phi|_g^2}$ in more detail.

\subsection*{Acknowledgments}
I am grateful to Bernd Ammann for pointing me to this problem and for all his support and advice.
Thanks to the anonymous referees for their thorough reviews.
I also like to thank Thomas Schick for his careful review of this article as part of his report on my PhD thesis.
During the execution of this project, I was supported by the SFB 1085 “Higher Invariants” funded by the DFG.

\section{Spinor bundles on hypersurfaces} \label{Sec:HypSpin}
This section is devoted to the study of spinor bundles on hypersurfaces.
This is to be understood in a two-fold manner:
First, we are interested in the situation where $M$ is an $n$-dimensional spacelike hypersurface of a time-oriented Lorentzian manifold $\overline{M}$.
Second, we assume that $M$ has boundary $\del M$ and restrict the spinor bundle further to $\del M$.

The first step is known under the name hypersurface spinor bundle, cf.~\cite{Hijazi.Zhang:2003,Ammann.Gloeckle:2023}.
The construction is the following:
Given a complex, say, representation $\Cl_{n,1} \to \End(W)$ and a spin structure $P_{\Spin(n)}M \to M$ of the Riemannian spin $n$-manifold $(M,g)$, we form the \emph{hypersurface spinor bundle} $\Hyp M \to M$ by associating $W$ to $P_{\Spin(n)}M$ via the restricted representation $\Spin(n) \hookrightarrow \Cl_n \hookrightarrow \Cl_{n,1} \to \End(W)$.
To justify the name, we assume that $(M,g)$ is a spacelike hypersurface (with induced metric) of a space- and time-oriented Lorentzian manifold $(\overline{M},\overline{g})$.
We moreover assume that $\overline{M}$ is spin (which can be assured by restricting to a small neighborhood of $M$) and the spin structure $P_{\Spin_0(n,1)}\overline{M} \to \overline{M}$ restricts to the one of $M$ in the sense that
\begin{equation} \label{eq:spinpb}
	\begin{tikzcd}
		P_{\Spin(n)} M \rar \dar & P_{\Spin_0(n,1)} \overline{M}_{|M} \dar \\
		P_{\SO(n)} M \rar & P_{\SO_0(n,1)} \overline{M}_{|M} \\[-1.8em]
		(e_1,\ldots,e_n) \rar[mapsto] & (e_0,e_1,\ldots,e_n)
	\end{tikzcd}
\end{equation}
is a pullback, where $e_0$ denotes the future unit normal of $M$ in $(\overline{M},\overline{g})$.
Then the spinor bundle $\Hyp \overline{M} \to \overline{M}$ associated to the representation $\Cl_{n,1} \to \End(W)$ restricts to the hypersurface spinor bundle on $M$, meaning that the canonical map yields a bundle isomorphism $\Hyp M \cong \Hyp \overline{M}_ {|M}$.

The hypersurface spinor bundle can be equipped with additional structures.
First of all, it comes with a Clifford multiplication $T\overline{M}_{|M} \otimes \Hyp M \to \Hyp M$.
For our purposes, it is more convenient to view it as a Clifford multiplication by vectors of $TM$ and an involution $e_0 \cdot \colon \Hyp M \to \Hyp M$ that anti-commutes with the $TM$-Clifford multiplication.
Secondly, if $W$ admits a $\Z/2\Z$-grading and the representation $\Cl_{n,1} \to \End(W)$ is a graded representation, then $\Hyp M$ carries a $\Z/2\Z$-grading with respect to which the Clifford multiplication and the involution $e_0 \cdot$ are odd.
Thirdly, $W$ can be endowed with a (positive definite) scalar product that is invariant under multiplication by the standard basis vectors $E_0, E_1, \ldots, E_n \in \R^{n,1} \subseteq \Cl_{n,1}$.
Such a scalar product can always be constructed by an averaging procedure.
It can also be made compatible with the $\Z/2\Z$-grading if $W$ carries one.
Such a scalar product on $W$ induces a fiberwise scalar product on $\Hyp M$ such that the Clifford multiplication by vectors in $TM$ is skew-adjoint, the involution $e_0 \cdot$ is self-adjoint and the $\Z/2\Z$-grading is orthogonal.

The last structure we want to consider is the one of a connection.
There are two canonical choices.
The Levi-Civita connection of $(M,g)$ gives rise to a connection $\nabla$ on $P_{\Spin(n)}M$ and hence on $\Hyp M$.
On the other hand, the Levi-Civita connection of $(\overline M, \overline g)$ induces a connection $\overline{\nabla}$ on $\Hyp \overline{M}$ and thus also on $\Hyp M$.
On tangent bundles the Levi-Civita connections of $(\overline{M}, \overline{g})$ and the hypersurface $M$ differ by the second fundamental form $k$:
\begin{equation} \label{eq:2FF}
	\overline{\nabla}_X Y = \nabla_X Y + k(X,Y)\,e_0
\end{equation}
for all $X,Y \in \Gamma(TM)$.
It follows that a similar relation also holds for the connections on $\Hyp M$:
\begin{equation} \label{eq:olnabla}
	\overline{\nabla}_X \phi = \nabla_X \phi + \frac12 k(X,-)^\sharp \cdot e_0 \cdot \phi
\end{equation}
for all $X \in \Gamma(TM)$ and $\phi \in \Gamma(\Hyp M)$.
This formula allows to define the connection $\overline{\nabla}$ even in the case when $M$ is not embedded as a hypersurface.
It is only necessary to have a metric $g$ and a symmetric $2$-tensor $k$ playing the role of the second fundamental form.
Pairs $(g,k)$ of this kind are known as \emph{initial data sets} on $M$.
From the way it is defined, it is clear that grading, Clifford multiplication, the involution $e_0 \cdot$ and scalar product are parallel with respect to $\nabla$.
Compatibility formulae for $\overline{\nabla}$ may be derived using \eqref{eq:olnabla}.

\begin{Set} \label{Set:HypSpin}
	Given an initial data set $(g,k)$ on a spin manifold $M$, we form a hypersurface spinor bundle $\Hyp M \to M$ with the structures of a $TM$-Clifford multiplication, an involution $e_0 \cdot$, a (positive definite) scalar product and a connection $\overline{\nabla}$.
	They satisfy the compatibility conditions described in the previous two paragraphs.
	When forming a $\Z/2\Z$-graded hypersurface spinor bundle, we also require the grading to be compatible with the other structures in the above-described sense.
\end{Set}

For step two, let $M$ furthermore have boundary $\del M$.
The inward-pointing unit normal along $\del M$ will be denoted by $\nu$.
The hypersurface spinor bundle restricts to $\Hyp M_{|\del M} \to \del M$, to which we refer as \emph{boundary hypersurface spinor bundle}.
It may, similarly as explained above, also be defined on $\del M$ intrinsically.
From that perspective the $TM_{|\del M}$-Clifford multiplication can be seen as a $T(\del M)$-Clifford multiplication together with a homomorphism $\nu \cdot$ anti-commuting with this Clifford multiplication and squaring to~$-\id$.

The boundary hypersurface spinor bundle carries even more connections of interest.
Of course, the connections $\overline{\nabla}$ and $\nabla$ restrict to $\Hyp M_{|\del M} \to \del M$.
Viewing the boundary hypersurface spinor bundle as bundle associated to the induced spin structure on $\del M$, we obtain the connection $\nabla^\del$ induced by the Levi-Civita connection of $\del M$.
We have
\begin{equation} \label{eq:SpBConn}
	\nabla_X \phi = \nabla^\del_X \phi - \frac12 (\nabla_X \nu) \cdot \nu \cdot \phi
\end{equation}
for all $X \in \Gamma(T(\del M))$ and $\phi \in \Gamma(\Hyp M_{|\del M})$.

There is another, less obvious choice.
For this, we observe that every metric connection on $T\overline{M}_{|\del M}$ gives rise to a connection on $P_{\Spin_0(n,1)} \overline{M}_{|\del M}$ and thus on the boundary hypersurface spinor bundle.
In this way the Levi-Civita connection of $(\overline{M}, \overline{g})$ induces $\overline{\nabla}$.
Equipping $T\overline{M}_{|\del M} = TM_{|\del M} \oplus \underline{\R}e_0$ with sum of the Levi-Civita connection of $(M,g)$ and the trivial connection we obtain $\nabla$.
The connection $\nabla^\del$ arises when we put on $T\overline{M}_{|\del M} = T(\del M) \oplus \underline{\R}\nu \oplus \underline{\R}e_0$ the sum of the Levi-Civita connection of $\del M$ with the trivial connection on the other summands.
Now, instead, let us take on the normal bundle $N(\del M) = \underline{\R}\nu \oplus \underline{\R}e_0$ the connection induced by $\overline{\nabla}$, i.\,e.\ $\pi^{\mathrm{nor}}(\overline{\nabla}_X n)$ with $X \in \Gamma(T(\del M))$, $n \in \Gamma(N(\del M))$ and $\pi^{\mathrm{nor}} \colon T\overline{M}_{|\del M} \to N(\del M)$ the orthogonal projection.
We obtain a connection on the boundary hypersurface spinor bundle, which we denote by $\overline{\nabla}^\del$.

As before, there is a simple comparison formula with (one of) the other connections.
This time, we provide a proof, which should also serve as a blueprint for the other claims made.
\begin{Lem} \label{Lem:BConn}
	The connection $\overline{\nabla}^\del$ satisfies
	\begin{align*}
		\overline{\nabla}_X \phi &= \overline{\nabla}^\del_X \phi + \frac12 (\overline{\nabla}_X (\nu \cdot e_0 \cdot)) \, \nu \cdot e_0 \cdot \phi \\
		&= \overline{\nabla}^\del_X \phi + \frac12 (\overline{\nabla}_X \nu) \cdot e_0 \cdot \nu \cdot e_0 \cdot \phi  + \frac12 \nu \cdot (\overline{\nabla}_X e_0) \cdot \nu \cdot e_0 \cdot \phi
	\end{align*}
	for all $X \in \Gamma(T(\del M))$ and $\phi \in \Gamma(\Hyp M_{|\del M})$.
\end{Lem}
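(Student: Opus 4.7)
\emph{Proof plan.} The approach is to exploit the general principle that any metric connection on the bundle $T\overline{M}_{|\del M}$ lifts canonically to a connection on the boundary hypersurface spinor bundle, and that the difference of two such lifts is obtained from their (skew-endomorphism-valued) difference on $T\overline{M}_{|\del M}$ via the standard $\mathfrak{so} \to \mathfrak{spin}$ correspondence. Both $\overline{\nabla}$ and $\overline{\nabla}^\del$ arise in exactly this way, so the lemma reduces to a finite-dimensional computation in the Clifford algebra.

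The second equality in the statement follows immediately from the Leibniz rule combined with the fact that Clifford multiplication by vector fields is $\overline{\nabla}$-parallel: as endomorphisms of $\Hyp M_{|\del M}$,
\[ \overline{\nabla}_X(\nu \cdot e_0 \cdot) = (\overline{\nabla}_X \nu) \cdot e_0 \cdot + \nu \cdot (\overline{\nabla}_X e_0) \cdot. \]
For the first equality I would begin by expressing $\overline{\nabla}^\del$ in terms of $\nabla^\del$. Both lift metric connections on $T\overline{M}_{|\del M} = T(\del M) \oplus N(\del M)$ that agree with the Levi-Civita connection of $\del M$ on the tangential factor and differ only on $N(\del M) = \underline{\R}\nu \oplus \underline{\R}e_0$: $\nabla^\del$ uses the trivial connection there, while $\overline{\nabla}^\del$ uses $\pi^{\mathrm{nor}}\overline{\nabla}$. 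Using \eqref{eq:2FF} and the identity $\overline{\nabla}_X e_0 = k(X,-)^\sharp$ (which follows from $\overline{g}(e_0,e_0) = -1$ being constant), one computes $\pi^{\mathrm{nor}}\overline{\nabla}_X \nu = k(X,\nu)\,e_0$ and $\pi^{\mathrm{nor}}\overline{\nabla}_X e_0 = k(X,\nu)\,\nu$. The associated skew endomorphism of $N(\del M)$ therefore swaps $\nu$ and $e_0$ weighted by $k(X,\nu)$, and its spin-lift is $\tfrac{1}{2} k(X,\nu)\,\nu \cdot e_0 \cdot$, as one checks by verifying the commutators $[\tfrac{1}{2}\nu \cdot e_0 \cdot,\, \nu\,\cdot] = e_0\,\cdot$ and $[\tfrac{1}{2}\nu \cdot e_0 \cdot,\, e_0\,\cdot] = \nu\,\cdot$ within the Lorentzian Clifford algebra. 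Hence $\overline{\nabla}^\del_X \phi = \nabla^\del_X \phi + \tfrac{1}{2} k(X,\nu)\,\nu \cdot e_0 \cdot \phi$.

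Combining this identity with \eqref{eq:olnabla} and \eqref{eq:SpBConn} gives an explicit formula for $\overline{\nabla}_X \phi - \overline{\nabla}^\del_X \phi$ in terms of $\nabla_X \nu$ and $k(X,-)^\sharp$. It then remains to verify that this matches $\tfrac{1}{2}(\overline{\nabla}_X(\nu \cdot e_0 \cdot))\,\nu \cdot e_0 \cdot \phi$. Substituting $\overline{\nabla}_X \nu = \nabla_X \nu + k(X,\nu)\,e_0$ and $\overline{\nabla}_X e_0 = k(X,-)^\sharp$ into the already-established second equality, decomposing $k(X,-)^\sharp$ into its $\del M$-tangential part and its $\nu$-component $k(X,\nu)\,\nu$, and simplifying using the Clifford relations $\nu \cdot \nu = -1$, $e_0 \cdot e_0 = +1$, $\nu \cdot e_0 = -e_0 \cdot \nu$ and $\nu \cdot Y = -Y \cdot \nu$ for $Y \in T(\del M)$, the $k(X,\nu)$-contributions cancel against the correction term coming from $\overline{\nabla}^\del$, leaving precisely the required identity.

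I expect the main obstacle to lie in the careful bookkeeping of Clifford sign conventions in Lorentzian signature, both in determining the spin-lift of the normal-bundle connection difference and in performing the final Clifford simplification; once these are under control, the remainder is routine algebra.
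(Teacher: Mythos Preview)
Your proposal is correct and arrives at the same identity, but it takes a different route from the paper's own proof. The paper computes the difference tensor $A = \overline{\nabla} - \overline{\nabla}^\del$ on $T\overline{M}_{|\del M}$ directly, observes that $A_X$ exchanges tangential and normal parts, and then applies the general spin-lift formula $\tfrac12\sum_{i<j} s_i s_j\,\overline{g}(e_j,A_X(e_i))\,e_i\cdot e_j\cdot$ in a local orthonormal frame $(e_0,\nu,e_2,\ldots,e_n)$; the second line of the lemma falls out after a short Clifford simplification, and the first line is then immediate from Leibniz. You instead factor the difference through the chain $\overline{\nabla} \to \nabla \to \nabla^\del \to \overline{\nabla}^\del$, reusing the already-established formulas \eqref{eq:olnabla} and \eqref{eq:SpBConn} for the first two steps and performing a fresh spin-lift only for the last step, which is simpler since it lives on the rank-two normal bundle. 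Your approach trades one moderately involved frame computation for several smaller ones and a final Clifford verification; this is arguably more modular (and nicely isolates the intermediate identity $\overline{\nabla}^\del_X\phi = \nabla^\del_X\phi + \tfrac12 k(X,\nu)\,\nu\cdot e_0\cdot\phi$), whereas the paper's argument is more self-contained and avoids expanding and re-collapsing the $k(X,\nu)$-terms.
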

\begin{proof}
	Abusing notation, we also denote the corresponding connections on $T\overline{M}_{|\del M}$ by $\overline{\nabla}$ and $\overline{\nabla}^\del$, respectively.
	Their difference defines a tensor $A \coloneqq \overline{\nabla} - \overline{\nabla}^\del \in \Gamma(T^*(\del M) \otimes \mathfrak{so}(T\overline{M},\overline{g})_{|\del M})$.
	Looking at tangential and normal parts separately, it computes to $A_X(Y) = \pi^\mathrm{nor} (\overline{\nabla}_X \pi^\mathrm{tan}(Y)) + \pi^\mathrm{tan}(\overline{\nabla}_X \pi^\mathrm{nor}(Y))$.
	
	Now observe that $T\overline{M}_{|\del M}$ is associated to $P_{\Spin_0(n,1)}\overline{M}_{|\del M}$ via the standard representation $\chi \colon \Spin_0(n,1) \lto \SO_0(n,1) \subseteq \End(\R^{n,1})$.
	If $\tilde{A} \in \Omega^1(P_{\Spin_0(n,1)}\overline{M}_{|\del M}, \mathfrak{spin}(n,1))$ denotes the difference of the connection $1$-forms of the connections $\overline{\nabla}$ and $\overline{\nabla}^\del$ on the spin principal bundle, then $A$ may be expressed as
	\begin{equation*}
		A_X(Y) = [\epsilon,\, \upd \chi(\tilde{A} \circ \upd \epsilon(X)) \,y],
	\end{equation*}
	where $Y = [\epsilon, y] \in T_p \overline{M}$ and $\epsilon$ is a local section of $P_{\Spin_0(n,1)} \overline{M}_{|\del M}$ around $p$ (cf.~\cite[(3.11)]{Baum:2014_2nd}).
	Similarly, the difference term that we aim for is given by
	\begin{equation*}
		(\overline{\nabla}_X - \overline{\nabla}^\del_X) \phi = [\epsilon,\, \upd \rho(\tilde{A} \circ \upd \epsilon(X)) \,\Phi],
	\end{equation*}
	where $\phi = [\epsilon, \Phi]$, $\epsilon$ is as above and $\rho \colon \Cl_{n,1} \to \End(W)$ denotes the Clifford multiplication action.
	
	Around some $p \in \del M$, fix a local orthonormal frame $(e_0, \nu, e_2, \ldots, e_n)$ which admits a lift $\epsilon$.
	Denoting the standard basis of $\R^{n,1}$ by $E_0, E_1, \ldots, E_n$, we obtain $\upd \chi(\tilde{A} \circ \upd \epsilon(X))(E_i) = \sum_j s_j \overline{g}(e_j, A_X(e_i)) E_j$ with $e_1 = \nu$ and $s_j=\overline{g}(e_j,e_j) \in \{\pm 1\}$.
	Using that the isomorphism $\upd \chi$ is given by $E_i E_j \mapsto 2 E_j \langle E_i, - \rangle - 2 E_i \langle E_j, - \rangle$, we obtain
	\begin{equation*}
		\tilde{A} \circ \upd \epsilon(X) = \frac{1}{2} \sum_{i < j} s_i s_j \overline{g}(e_j, A_X(e_i)) E_i E_j.
	\end{equation*}
	Thus, remains to compute
	\begin{align*}
		(\overline{\nabla}_X - \overline{\nabla}^\del_X) \phi &= \frac{1}{2} \sum_{i < j} s_i s_j \overline{g}(e_j, A_X(e_i)) \, e_i \cdot e_j \cdot \phi \\
			&= \frac{1}{2} \sum_{j = 2}^n  - \overline{g}(e_j, \overline{\nabla}_X e_0) \, e_0 \cdot e_j \cdot \phi + \frac{1}{2} \sum_{j = 2}^n \overline{g}(e_j, \overline{\nabla}_X \nu) \, \nu \cdot e_j \cdot \phi \\
			&= -\frac{1}{2} e_0 \cdot (\overline{\nabla}_X e_0) \cdot \phi + \frac{1}{2} \overline{g}(\nu, \overline{\nabla}_X e_0) \, e_0 \cdot \nu \cdot \phi \\
			&\phantom{=}\,+ \frac{1}{2} \nu \cdot (\overline{\nabla}_X \nu) \cdot \phi + \frac{1}{2} \overline{g}(e_0, \overline{\nabla}_X \nu) \, \nu \cdot e_0 \cdot \phi \\
			&= \frac{1}{2} (\overline{\nabla}_X e_0) \cdot e_0 \cdot \phi  - \overline{g}(\nu, \overline{\nabla}_X e_0) \, \nu \cdot e_0 \cdot \phi - \frac{1}{2}  (\overline{\nabla}_X \nu) \cdot \nu \cdot \phi \\
			&= \frac{1}{2} \nu \cdot (\overline{\nabla}_X e_0) \cdot \nu \cdot e_0 \cdot \phi + \frac{1}{2} (\overline{\nabla}_X \nu) \cdot e_0 \cdot \nu \cdot e_0 \cdot \phi. \qedhere
	\end{align*}
\end{proof}

\begin{Bem} \label{Rem:NablaE0}
	The expression $X \mapsto \overline{\nabla}_X e_0$ defines a section of the endomorphism bundle of $TM$ since $\overline{g}(\overline{\nabla}_X e_0, e_0) = \frac{1}{2}\del_X \overline{g}(e_0,e_0) = 0$ for any $X \in TM$.
	Moreover, we have $\overline{g}(\overline{\nabla}_X e_0, Y) = -\overline{g}(e_0, \overline{\nabla}_X Y) = k(X,Y)$ for all $X, Y \in \Gamma(TM)$, so it is the endomorphism associated to $k$ via $g$.
	In particular, it only depends on the initial data set $(g,k)$ and the expression also makes sense when the surrounding Lorentzian manifold $(\overline{M}, \overline{g})$ is not at hand.
\end{Bem}

\section{Dirac-Witten operators as Callias operators} \label{Sec:DWCallias}
In this section, we introduce the main player -- the Dirac-Witten operator -- and study its analytic properties.
As it turns out, the Dirac-Witten operator is a Callias operator, i.\,e.\ of the form Dirac operator plus potential.
The analytic framework will be borrowed from Cecchini and Zeidler \cite{Cecchini.Zeidler:2024}, who studied this kind of operators.

The general setup for this section is the following.
We consider a compact spin manifold $M$ with potentially empty boundary $\del M = \del_+ M \dotcup \del_- M$.
We endow $M$ with an initial data set $(g,k)$ and denote by $\Hyp M$ a hypersurface spinor bundle on $M$ as in \cref{Set:HypSpin}.
As explained in the last section, this carries a connection $\overline{\nabla}_X \phi = \nabla_X \phi - \frac{1}{2} e_0 \cdot k(X,\blank)^\sharp \cdot \phi$ associated to $(g,k)$.
Furthermore, let $\tilde{\nu}$ be the unit normal on $\del M$ that is inward-pointing along $\del_+ M$ and outward-pointing along $\del_- M$.
The function $s$ will be defined to be $+1$ on $\del_+ M$ and $-1$ on $\del_- M$, so that $\nu \coloneqq s \tilde{\nu}$ is inward-pointing on all of $\del M$. 

\begin{Def}
	The \emph{Dirac-Witten operator} $\DW \colon \Gamma(\Hyp M) \to \Gamma(\Hyp M)$ of a hypersurface spinor bundle $\Hyp M \to M$ is defined by the local formula
	\begin{align*}
		\DW &= \sum_{i=1}^n e_i \cdot \overline{\nabla}_{e_i}
		%\DW \colon \Gamma(\Hyp M) \overset{\overline{\nabla}}{\lto} \Gamma(T^*M \otimes \Hyp M) \lto \Gamma(T^*M \otimes T^*M \otimes \Hyp M) \overset{\tr^g}{\lto} \Gamma(\Hyp M),
	\end{align*}
	where $e_1, \ldots, e_n$ is a local $g$-orthonormal frame.
\end{Def}

A straightforward calculation shows
\begin{align*}
	\DW = D - \frac{1}{2} \tr^g(k)\, e_0 \cdot,
\end{align*}
where the Dirac operator $D = \sum_{i=1}^n e_i \cdot \nabla_{e_i}$ is defined with respect to the connection $\nabla$.
Hence, the Dirac-Witten operator is the sum of a Dirac operator and a potential -- a Callias operator.

%\begin{Lem}
%	The Dirac-Witten operator satisfies $\DW = D - \frac{1}{2} \tr^g(k)\, e_0 \cdot$.
%\end{Lem}
%\begin{proof}
%	We check this for an arbitrarily chosen point $p \in M$.
%	Let $e_1, \ldots, e_n$ a $g$-orthonormal basis of $T_pM$ and $\phi \in \Gamma(\Hyp M)$.
%	We have
%	\begin{align*}
%		\DW \phi(p) &= \sum_{i=1}^n e_i \cdot \overline{\nabla}_{e_i} \phi \\
%					&= \sum_{i=1}^n e_i \cdot \nabla_{e_i} \phi - \frac{1}{2} \sum_{i,j=1}^n k(e_i, e_j)\, e_i \cdot e_0 \cdot e_j \cdot \phi(p) \\
%					&= D\phi(p) - \frac{1}{2} \tr^g(k)\, e_0 \cdot \phi(p),
% 	\end{align*}
% 	since for $i \neq j$ the term $e_i \cdot e_0 \cdot e_j \cdot$ is anti-symmetric in $i$ and $j$ whereas $k(e_i, e_j)$ is symmetric. 
%\end{proof}

One of the most important properties of the Dirac-Witten operator is that it satisfies the following Schrödinger-Lichnerowicz type formula (cf.~\cite{Witten:1981,Parker.Taubes:1982}):
\begin{align} \label{eq:SL}
	\DW^2 = \overline{\nabla}^*\overline{\nabla} + \frac{1}{2} (\rho - e_0 \cdot j^\sharp \cdot),
\end{align}
where \emph{energy density} $\rho$ and \emph{momentum density} $j$ are defined by
\begin{align*}
	\rho &= \frac{1}{2}(\scal^g + \tr^g(k)^2- |k|_g^2) \\
	j &= \div^g(k) - \upd \tr^g(k),
\end{align*}
respectively.
We will study an integrated form of this identity.
For the boundary terms appearing, we use the following definitions:
\begin{Def}
	The \emph{boundary chirality operator} $\bch \colon \Hyp M_{|\del M} \to \Hyp M_{|\del M}$ of the hypersurface spinor bundle is defined by $\bch = \tilde{\nu} \cdot e_0 \cdot = s\nu \cdot e_0 \cdot$.
	The \emph{boundary Dirac-Witten operator} $\bDW \colon  \Gamma(\Hyp M_{|\del M}) \to \Gamma(\Hyp M_{|\del M})$ is defined via the local formula
	\begin{align*}
		\bDW = \sum_{i=2}^n e_i \cdot \nu \cdot \overline{\nabla}^\del_{e_i},
		%\bDW \colon  \Gamma(\Hyp M_{|\del M}) \overset{\overline{\nabla}^\del}{\lto} \Gamma(T^*\del M \otimes \Hyp M_{|\del M}) \to \Gamma(T^* \del M \otimes T^* \del M \otimes \Hyp M_{|\del M}) \overset{\tr^g}{\lto} \Gamma(\Hyp M_{|\del M}),
	\end{align*}
	where $\nu, e_2, \ldots, e_n$ is a local $g$-orthonormal frame.
\end{Def}	

It is immediate that $\bch$ is a self-adjoint involution.
We shall need the following properties of $\bDW$.
\begin{Lem}
	The boundary Dirac-Witten operator anti-commutes with the boundary chirality operator.
\end{Lem}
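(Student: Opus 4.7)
The plan is to reduce the statement to a pointwise Clifford-algebra computation by first establishing that the boundary chirality $\bch = \tilde{\nu} \cdot e_0 \cdot$ is parallel with respect to $\overline{\nabla}^\del$. Once this is known, for any $\phi \in \Gamma(\Hyp M_{|\del M})$ and local $g$-orthonormal frame $\nu, e_2, \ldots, e_n$ we may pull $\bch$ through the covariant derivative and write
\begin{equation*}
	\bDW(\bch\phi) = \sum_{i=2}^n e_i \cdot \nu \cdot \bch \cdot \overline{\nabla}^\del_{e_i}\phi, \qquad \bch\bDW\phi = \sum_{i=2}^n \bch \cdot e_i \cdot \nu \cdot \overline{\nabla}^\del_{e_i}\phi.
\end{equation*}
The claim then follows by checking $e_i \cdot \nu \cdot \bch + \bch \cdot e_i \cdot \nu = 0 \in \End(\Hyp M_{|\del M})$ for each $i \geq 2$. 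This is a purely algebraic identity in the Clifford algebra, using the anti-commutation relations $\{e_i, e_0\} = 0$, $\{e_i, \nu\} = 0$ (since $e_i$ is $g$-orthogonal to $\nu$), together with $\{\nu, e_0\} = 0$ and $e_0 \cdot e_0 = \id$, $\nu \cdot \nu = -\id$. A short shuffle of three anti-commutations turns $\bch \cdot e_i \cdot \nu$ into $-e_i \cdot \nu \cdot \bch$.

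The heart of the argument is therefore the parallelism of $\bch$ for $\overline{\nabla}^\del$. Since $\overline{\nabla}^\del$ arises, by the construction in \cref{Sec:HypSpin}, from the metric connection on $T\overline{M}_{|\del M}$ that splits as the Levi-Civita connection of $(\del M, g_{|\del M})$ on $T(\del M)$ together with the normal projection $\pi^{\mathrm{nor}}\overline{\nabla}$ on the normal bundle $N(\del M) = \underline{\R}\nu \oplus \underline{\R}e_0$, Clifford multiplication by sections of $T\overline{M}_{|\del M}$ is parallel for $\overline{\nabla}^\del$. Hence $\overline{\nabla}^\del_X(\nu \cdot e_0 \cdot) = (\overline{\nabla}^\del_X \nu) \cdot e_0 \cdot + \nu \cdot (\overline{\nabla}^\del_X e_0) \cdot$ for $X \in \Gamma(T(\del M))$. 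Using $\overline{g}(\nu, \nu) = 1$ and $\overline{g}(e_0, e_0) = -1$ together with the fact that $\overline{\nabla}$ preserves $\overline{g}$, one finds $\overline{\nabla}^\del_X \nu = \overline{g}(\nu, \overline{\nabla}_X e_0)\, e_0$ and $\overline{\nabla}^\del_X e_0 = \overline{g}(\overline{\nabla}_X e_0, \nu)\, \nu$; these two contributions cancel after using $e_0 \cdot e_0 = \id$ and $\nu \cdot \nu = -\id$. Since $s$ is locally constant, $\bch = s\,\nu \cdot e_0 \cdot$ is $\overline{\nabla}^\del$-parallel as well.

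Conceptually, this is just the statement that the Clifford volume of the oriented signature-$(1,1)$ plane bundle $N(\del M)$ is parallel under any metric connection on it; the only slightly technical step is to verify the sign bookkeeping in the Lorentzian normal bundle, which I expect to be the only place where one has to be attentive. Everything else — swapping $\bch$ through the covariant derivative and the anti-commutation check in the Clifford algebra — is formal. No assumption on the initial data $(g, k)$ beyond those providing $\overline{\nabla}$ is used.
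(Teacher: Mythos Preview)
Your proof is correct and follows the same overall strategy as the paper: first establish that $\bch$ is $\overline{\nabla}^\del$-parallel (equivalently, that $\overline{\nabla}^\del$ commutes with $\bch$), then finish with the pointwise Clifford identities $e_i \cdot \bch = \bch \cdot e_i$ and $\nu \cdot \bch = -\bch \cdot \nu$. The only difference lies in how the parallelism step is carried out. The paper derives $\overline{\nabla}^\del(\bch\phi) = \bch\,\overline{\nabla}^\del\phi$ algebraically from the comparison formula of \cref{Lem:BConn} together with the identity $(\overline{\nabla}\bch)\bch + \bch(\overline{\nabla}\bch) = 0$; you instead argue geometrically, observing that $\bch$ is (up to the locally constant sign $s$) the Clifford volume of the Lorentzian normal bundle $N(\del M)$ and hence parallel for the metric normal connection, checking this by direct computation of $\overline{\nabla}^\del_X\nu$ and $\overline{\nabla}^\del_X e_0$. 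Your route avoids invoking \cref{Lem:BConn} and makes the reason for parallelism more transparent, while the paper's route reuses an already-proved lemma; both are equally valid and of comparable length.
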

\begin{proof}
	We first observe that $0 = \overline{\nabla} \id_{\Hyp M_{|\del M}} = \overline{\nabla} (\bch^2) = \bch \overline{\nabla} \bch + (\overline{\nabla} \bch) \bch$.
	For any $\phi \in \Gamma(\Hyp M_{|\del M})$ we hence get
	\begin{align*}
		\overline{\nabla}^\del (\bch \phi)
			&= \overline{\nabla} (\bch \phi) - \frac{1}{2} (\overline{\nabla} \bch) \bch\, \bch \phi\\
			& = \overline{\nabla} (\bch) \phi + \bch \overline{\nabla}\phi - \frac{1}{2} (\overline{\nabla} \bch) \bch^2 \phi \\
			&= \bch \overline{\nabla}\phi + \frac{1}{2} (\overline{\nabla} \bch) \bch^2 \phi \\
			&= \bch \overline{\nabla}\phi - \frac{1}{2} \bch (\overline{\nabla} \bch) \bch \phi \\
			&= \bch \overline{\nabla}^\del \phi
	\end{align*}
	using \cref{Lem:BConn}.
	Together with $\nu \cdot \bch = -\bch \nu \cdot$ and $e_i \cdot \bch = \bch e_i \cdot$ for $e_i \perp \nu$, we obtain $\bDW \bch \phi = \sum_{i=2}^n e_i \cdot \nu \cdot \overline{\nabla}^\del_{e_i} (\bch \phi) = \sum_{i=2}^n e_i \cdot \nu \cdot \bch \overline{\nabla}^\del_{e_i} \phi = -\bch \bDW \phi$.
\end{proof}
\begin{Lem} \label{Lem:BDW}
	For $\phi \in \Gamma(\Hyp M)$, we have
	\begin{align*}
		\bDW \phi_{|\del M} &= - \nu \cdot (\DW \phi)_{|\del M} - \overline{\nabla}_\nu \phi + \frac{1}{2} s \left(\tr^{\del M} (-\overline{\nabla}\tilde{\nu}) - \tr^{\del M}(\overline{\nabla}e_0)\,\bch \right) \phi_{|\del M}.
	\end{align*}
\end{Lem}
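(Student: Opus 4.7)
The plan is to split the Dirac-Witten operator into its normal and tangential pieces at the boundary, use \cref{Lem:BConn} to convert $\overline{\nabla}$ into $\overline{\nabla}^\del$ on the tangential piece, and then simplify the remaining zeroth-order terms via Clifford algebra.

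First, I would fix a boundary point $p \in \del M$ and choose a local orthonormal frame with $e_1 = \nu$, so that $e_2, \ldots, e_n$ is tangent to $\del M$. Then at $p$ we can write
\begin{equation*}
    \DW \phi = \nu \cdot \overline{\nabla}_\nu \phi + \sum_{i=2}^n e_i \cdot \overline{\nabla}_{e_i} \phi.
\end{equation*}
Multiplying by $-\nu \cdot$ and using $\nu \cdot \nu = -1$ together with $\nu \cdot e_i = -e_i \cdot \nu$ for $i \geq 2$, this rearranges to
\begin{equation*}
    \sum_{i=2}^n e_i \cdot \nu \cdot \overline{\nabla}_{e_i} \phi = -\nu \cdot \DW \phi - \overline{\nabla}_\nu \phi,
\end{equation*}
so the task reduces to comparing $\sum_{i \geq 2} e_i \cdot \nu \cdot \overline{\nabla}_{e_i} \phi$ with $\bDW \phi = \sum_{i \geq 2} e_i \cdot \nu \cdot \overline{\nabla}^\del_{e_i} \phi$, i.e.\ to computing $\sum_{i \geq 2} e_i \cdot \nu \cdot (\overline{\nabla}_{e_i} - \overline{\nabla}^\del_{e_i}) \phi$.

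Next, I would plug in the formula from \cref{Lem:BConn}, which expresses this difference as $\frac12 (\overline{\nabla}_{e_i}\nu) \cdot e_0 \cdot \nu \cdot e_0 \cdot \phi + \frac12 \nu \cdot (\overline{\nabla}_{e_i}e_0) \cdot \nu \cdot e_0 \cdot \phi$. Two Clifford simplifications then do the work. Since $\overline{\nabla}_{e_i}\nu \perp \nu$, it anticommutes with $\nu \cdot$, and $\nu \cdot e_0 \cdot \nu \cdot e_0 \cdot = \mathbb{1}$; hence the first type of term reduces to $-\tfrac12 \sum_{i \geq 2} e_i \cdot (\overline{\nabla}_{e_i} \nu) \phi$. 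For the second, the two $\nu$'s square to $-1$, so it reduces to $-\tfrac12 \sum_{i \geq 2} e_i \cdot (\overline{\nabla}_{e_i}e_0) \cdot \nu \cdot e_0 \cdot \phi$.

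Finally, I would identify the scalar pieces. Using $e_i \cdot e_j + e_j \cdot e_i = 0$ for distinct tangential directions and $e_i \cdot e_i = -1$, the purely tangential component of $\sum_{i \geq 2} e_i \cdot (\overline{\nabla}_{e_i}\nu)$ collapses to $-\tr^{\del M}(\overline{\nabla}\nu) = -s\, \tr^{\del M}(\overline{\nabla}\tilde\nu)$, while its $e_0$-component cancels against the analogous tangential-in-$TM$ piece coming from $\sum_{i \geq 2} e_i \cdot (\overline{\nabla}_{e_i}e_0)$ after multiplication by $\nu \cdot e_0 \cdot$ (one uses $\overline{\nabla}_{e_i}e_0 = k(e_i, \cdot)^\sharp$, which is tangent to $M$). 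The surviving $\tr^{\del M}$-piece from the second sum produces $-\tfrac12 \tr^{\del M}(\overline{\nabla}e_0)\, \nu \cdot e_0 \cdot \phi$, which equals $-\tfrac12 s\, \tr^{\del M}(\overline{\nabla}e_0)\, \bch \phi$ by the definition $\bch = s\nu \cdot e_0 \cdot$. Collecting everything yields the claimed identity.

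The main obstacle is not any single step but the bookkeeping: tracking the sign $s$, the difference between $\nu$ and $\tilde\nu$, and ensuring that the mixed $e_0$-tangential cross terms arising from $\overline{\nabla}_{e_i}\nu$ and $\overline{\nabla}_{e_i}e_0$ cancel exactly. Organising the computation by splitting $\overline{\nabla}_{e_i}\nu = \nabla_{e_i}\nu + k(e_i,\nu) e_0$ and $\overline{\nabla}_{e_i}e_0 = k(e_i,\cdot)^\sharp$ (as derived from \eqref{eq:2FF} and the metric compatibility of $\overline{\nabla}$ with $e_0$) makes this cancellation transparent.
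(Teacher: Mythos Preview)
Your plan is correct and follows essentially the same route as the paper: both split $\DW$ into normal and tangential parts, invoke \cref{Lem:BConn} to pass between $\overline{\nabla}$ and $\overline{\nabla}^\del$, and then reduce the Clifford sums to traces using symmetry of the second fundamental forms, with the mixed $k(e_i,\nu)$-terms from $\overline{\nabla}_{e_i}\nu$ and $\overline{\nabla}_{e_i}e_0$ cancelling. The only cosmetic difference is that the paper starts from $\bDW$ and works outward whereas you start from $\DW$; the algebra is the same.
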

\begin{proof}
	The necessary calculation is straightforward, using \cref{Lem:BConn} when passing from the first to the second line:
	\begin{align*}
		\bDW \phi_{|\del M} &= \sum_{i=2}^n e_i \cdot \nu \cdot \overline{\nabla}^\del_{e_i} \phi_{|\del M} \\
			&= - \nu \cdot \sum_{i=2}^n e_i \cdot \overline{\nabla}_{e_i} \phi_{|\del M}
				+ \frac{1}{2} \nu \cdot \sum_{i=2}^n e_i \cdot (\overline{\nabla}_{e_i} \bch) \bch \phi_{|\del M} \\
			&= - \nu \cdot (\DW \phi)_{|\del M}
				+\nu \cdot \nu \cdot \overline{\nabla}_\nu \phi \\
			&\phantom{=}\;
				+ \frac{1}{2} \sum_{i=2}^n s \tilde{\nu} \cdot e_i \cdot (\overline{\nabla}_{e_i} \tilde{\nu}) \cdot e_0 \cdot \bch \phi_{|\del M}
				+ \frac{1}{2} \sum_{i=2}^n s \tilde{\nu} \cdot e_i \cdot \tilde{\nu} \cdot (\overline{\nabla}_{e_i}e_0) \cdot \bch \phi_{|\del M} \\
			&= - \nu \cdot (\DW \phi)_{|\del M}
				- \overline{\nabla}_\nu \phi	
				+ \frac{1}{2} s \tr^{\del M}(-\overline{\nabla}\tilde{\nu})\,\tilde{\nu} \cdot e_0 \cdot \bch \phi_{|\del M}
				- \frac{1}{2}s\tr^{\del M}(\overline{\nabla}e_0) \bch \phi_{|\del M} \\
			&= - \nu \cdot (\DW \phi)_{|\del M} - \overline{\nabla}_\nu \phi + \frac{1}{2} s \left(\tr^{\del M} (-\overline{\nabla}\tilde{\nu})  - \tr^{\del M}(\overline{\nabla}e_0)\,\bch \right) \phi_{|\del M} .
		\qedhere
	\end{align*}
\end{proof}

Now, we are ready to state and prove the integrated version of the  Schrö\-ding\-er-Lich\-ne\-ro\-wicz formula for $\DW$.

\begin{Prop} \label{Prop:IntSL}
	For $\phi \in \Gamma(\Hyp M)$, the following holds:
	\begin{align*}
		\|\DW \phi\|_{L^2(M)}^2 &= \|\overline{\nabla} \phi\|_{L^2(M)}^2 + \left( \phi, \frac{1}{2}(\rho - e_0 \cdot j^\sharp \cdot) \phi \right)_{L^2(M)} \\
		&\phantom{=}\,+ \left( \phi_{|\del M}, -\bDW  \phi_{|\del M} + \frac{1}{2}s \left( \tr^{\del M}(-\overline{\nabla}\tilde{\nu}) -  \tr^{\del M}(\overline{\nabla}e_0) \bch \right) \phi_{|\del M} \right)_{L^2(\del M)}.
	\end{align*}
\end{Prop}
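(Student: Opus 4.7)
The plan is to integrate the pointwise Schrödinger-Lichnerowicz formula~\eqref{eq:SL} in $L^2(M)$ against $\phi$, apply Green-type integration by parts on both sides, and finally assemble the resulting boundary contributions into the claimed form using \cref{Lem:BDW}. Pairing~\eqref{eq:SL} with $\phi$ yields
\[(\phi, \DW^2\phi)_{L^2(M)} = (\phi, \overline{\nabla}^*\overline{\nabla}\phi)_{L^2(M)} + \tfrac{1}{2}\bigl((\rho - e_0\cdot j^\sharp\cdot)\phi, \phi\bigr)_{L^2(M)},\]
and the strategy is to rewrite each side as a norm square plus a single boundary integral.

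For the left-hand side, I would observe that $\DW - D = -\tfrac{1}{2}\tr^g(k)\,e_0\cdot$ is pointwise self-adjoint, so $\DW$ obeys the standard Green identity for Dirac-type operators. A divergence computation with the $1$-form $X\mapsto -\langle \phi, X\cdot \DW\phi\rangle$, combined with the inward orientation of $\nu$, produces
\[(\phi, \DW^2\phi)_{L^2(M)} = \|\DW\phi\|^2_{L^2(M)} + \int_{\del M}\langle \nu\cdot\phi, \DW\phi\rangle\,\dvol_{\del M}.\]
For the right-hand side, I would interpret $\overline{\nabla}^*$ as the $L^2$-formal adjoint of $\overline{\nabla}$ -- the convention under which~\eqref{eq:SL} is an identity -- and apply the defining integration by parts to obtain
\[(\phi, \overline{\nabla}^*\overline{\nabla}\phi)_{L^2(M)} = \|\overline{\nabla}\phi\|^2_{L^2(M)} + \int_{\del M}\langle \phi, \overline{\nabla}_\nu\phi\rangle\,\dvol_{\del M}.\]

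Combining the three identities leaves
\[\|\DW\phi\|^2 = \|\overline{\nabla}\phi\|^2 + \tfrac{1}{2}\bigl(\phi, (\rho - e_0\cdot j^\sharp\cdot)\phi\bigr)_{L^2(M)} + \int_{\del M}\bigl(\langle \phi, \overline{\nabla}_\nu\phi\rangle - \langle \nu\cdot\phi, \DW\phi\rangle\bigr)\,\dvol_{\del M}.\]
Solving \cref{Lem:BDW} for $\overline{\nabla}_\nu\phi$ and substituting, the two $\DW\phi$-contributions cancel by skew-adjointness of $\nu\cdot$, using $\langle \phi, -\nu\cdot\DW\phi\rangle = \langle \nu\cdot\phi, \DW\phi\rangle$, and the surviving piece is exactly the boundary integral stated in the proposition.

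The delicate point will be the second Green identity: because $\overline{\nabla}$ is not compatible with the Hermitian scalar product on $\Hyp M$, I need to check that the $A = \overline{\nabla} - \nabla$ correction is already encoded in the formal adjoint $\overline{\nabla}^*$ appearing in~\eqref{eq:SL}, so that no spurious interior terms arise in the integrated formula. Apart from this bookkeeping, the remainder is a routine Clifford algebra computation.
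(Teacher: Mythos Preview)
Your proposal is correct and follows essentially the same route as the paper: both integrate the pointwise Schrödinger--Lichnerowicz identity against $\phi$, apply Green formulas for $\DW$ and $\overline{\nabla}$ separately, and then rewrite the boundary term $(\phi_{|\del M},\,\nu\cdot(\DW\phi)_{|\del M}+\overline{\nabla}_\nu\phi)_{L^2(\del M)}$ via \cref{Lem:BDW}. The point you flag as delicate is indeed the only non-routine step; the paper resolves it exactly as you suggest, by observing that $\overline{\nabla}-\nabla$ is a pointwise self-adjoint zero-order term, so the Green identity for $\overline{\nabla}$ with its formal adjoint $\overline{\nabla}^*$ has the same boundary contribution as that for $\nabla$.
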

\begin{proof}
	The formula follows from taking together four formulae.
	Firstly, there is a partial integration formula for $\DW$.
	This follows from the well-known one for $D$, keeping in mind that the difference term $\DW -D$ is a self-adjoint section in $\Gamma(\End(\Hyp M))$:
	\begin{align*}
		(\DW \phi, \psi)_{L^2(M)} - ( \phi, \DW \psi)_{L^2(M)} &= (D \phi, \psi)_{L^2(M)} - ( \phi, D \psi)_{L^2(M)} \\
		&= (\phi_{|\del M}, \nu \cdot \psi_{|\del M})_{L^2(\del M)}
	\end{align*}
	for all $\phi, \psi \in \Gamma(\Hyp M)$.
	Secondly, the partial integration formula for $\overline{\nabla}$ following from the one for $\nabla$:
	\begin{align*}
		(\overline{\nabla} \phi, \Psi)_{L^2(M)} - ( \phi, \overline{\nabla}^* \Psi)_{L^2(M)} &= (\nabla \phi, \Psi)_{L^2(M)} - ( \phi, \nabla^* \Psi)_{L^2(M)} \\
		&= -(\phi_{|\del M}, \Psi(\nu))_{L^2(\del M)}
	\end{align*}
	for all $\phi \in \Gamma(\Hyp M),\, \Psi \in \Gamma(T^*M \otimes \Hyp M)$.
	Thirdly, there is the Schrödinger-Lichnerowicz formula \eqref{eq:SL}.
	Together, we get:
	\begin{align*}
		\|\DW \phi\|_{L^2(M)}^2 &=  (\phi, \DW^2 \phi)_{L^2(M)} + (\phi_{|\del M}, \nu \cdot (\DW\phi)_{|\del M})_{L^2(\del M)} \\
		&=(\phi,\overline{\nabla}^* \overline{\nabla} \phi)_{L^2(M)} + \left( \phi, \frac{1}{2}(\rho - e_0 \cdot j^\sharp \cdot) \phi \right)_{L^2(M)} \\
		&\phantom{=}\, + (\phi_{|\del M}, \nu \cdot (\DW\phi)_{|\del M})_{L^2(\del M)} \\
		&=\|\overline{\nabla} \phi\|_{L^2(M)}^2 + \left( \phi, \frac{1}{2}(\rho - e_0 \cdot j^\sharp \cdot) \phi \right)_{L^2(M)} \\
		&\phantom{=}\,+ \left( \phi_{|\del M}, \nu \cdot (\DW\phi)_{|\del M} + \overline{\nabla}_\nu \phi \right)_{L^2(\del M)}.
	\end{align*}
	Now the claim follows from the formula of \cref{Lem:BDW}.
\end{proof}

We now consider the Dirac-Witten operator with chirality boundary conditions, i.\,e.\ the Dirac-Witten operator defined on sections $\phi \in \Gamma(\Hyp M)$ with $\bch \phi_{|\del M} = \phi_{|\del M}$.
This fits into the framework of chirality boundary conditions discussed in \cite[Ex.~4.20]{Baer.Ballmann:2016}.
To see this, we first note that $\bDW$ is an adapted boundary operator for $\DW$, i.\,e.\ their respective principal symbols $\sigma_{\bDW}$ and $\sigma_{\DW}$ satisfy $\sigma_{\bDW}(\xi) = \xi^\sharp \cdot \nu \cdot = -\nu \cdot \xi^\sharp \cdot = \sigma_{\DW}(\nu^\flat)^{-1} \circ \sigma_{\DW}(\xi)$ for all $\xi \in T^*\del M$.
Now it just remains to observe that $\bch$ is a self-adjoint involution that anti-commutes with $\bDW$.
In general, chirality boundary conditions are elliptic in the sense of Bär and Ballmann.
Moreover, the chirality boundary condition considered here is also self-adjoint.
This follows from the fact that $\bch$ anti-commutes with $\nu \cdot = \sigma_{\DW}(\nu^\flat)$.

From now on, we assume that the hypersurface spinor bundle $\Hyp M \to M$ is $\Z/2\Z$-graded as in \cref{Set:HypSpin}.
We recall that it carries a scalar product, a metric connection $\nabla$ and a skew-adjoint, $\nabla$-parallel $TM$-Clifford multiplication such that the $\Z/2\Z$-grading is orthogonal, parallel and the Clifford multiplication is odd -- this is what Cecchini and Zeidler call a \emph{$\Z/2\Z$-graded Dirac bundle} over $M$ (cf.~\cite[Def.~2.1]{Cecchini.Zeidler:2024}).
Furthermore, they call a $\Z/2\Z$-graded Dirac bundle over $M$ a \emph{relative Dirac bundle} with \emph{support} $K$ if it is endowed with an odd, self-adjoint and parallel involution $\sigma \in \Gamma(M \setminus K, \End(\Hyp M))$ that anti-commutes with the $TM$-Clifford multiplication and admits a smooth extension to a bundle map on an open neighborhood of $\overline{M \setminus K}$ (\cite[Def.~2.2]{Cecchini.Zeidler:2024}).
Hence, $e_0 \cdot$ equips $\Hyp M$ with the structure of a relative Dirac bundle with empty support.
The formula $\DW = D - \frac{1}{2} \tr^g(k) e_0 \cdot$ shows that the Dirac-Witten operator is the Callias operator (\cite[eq.~(3.1)]{Cecchini.Zeidler:2024}) of this relative Dirac bundle associated to the potential $-\frac{1}{2} \tr^g(k)$, which has also been observed by Chai and Wan \cite{Chai.Wan:2025}.
Here, the Dirac-Witten operator will be viewed as bounded operator
\begin{align*}
	\DW_\bch \colon H^1_\bch(\Hyp M) \to L^2(\Hyp M),
\end{align*}
where $H^1_\bch(\Hyp M)$ is the closure of $\{\phi \in \Gamma(\Hyp M) \mid \bch \phi_{|\del M} = \phi_{|\del M}\}$ with respect to the $H^1$-Sobolev norm.
The analytic results from \cite[Sec.~3]{Cecchini.Zeidler:2024} give the following proposition.

\begin{Prop}[{\cite[Thm.~3.4]{Cecchini.Zeidler:2024}}]
	The Dirac-Witten operator with chirality boundary conditions defines a Fredholm operator $\DW_\bch \colon H^1_\bch(\Hyp M) \to L^2(\Hyp M)$, which is self-adjoint when considered as unbounded operator $L^2(\Hyp M) \supseteq H^1_\bch(\Hyp M) \to L^2(\Hyp M)$.
\end{Prop}

Since $M$ is compact and by homotopy invariance of the index, we expect its $\Z/2\Z$-graded index
\begin{align*}
	\mathrm{ind}_{\Z/2\Z}(\DW_\bch) \coloneqq \tr(\iota_{|\ker(\DW_\bch)}) = \dim \ker({\DW_\bch}_{|\Hyp^+ M}) - \dim \ker({\DW_\bch}_{|\Hyp^- M})
\end{align*}
to be independent of $(g,k)$.
Here, $\iota$ denotes the $\Z/2\Z$-grading operator and $\Hyp^\pm M$ its $\pm 1$-eigenspaces. 
In fact, it can be expressed by a topological formula that we discuss in the case of the “classical” hypersurface spinor bundle.
If $n$ is even, then $i^{\frac{n+2}{2}}\iota e_0 \cdot e_1 \cdot \dots e_n \cdot$ defines an additional symmetry of any $\Z/2\Z$-graded hypersurface spinor bundle $\Hyp M$, forcing the index to be zero.
We can thus restrict our attention to the odd-dimensional case.
In this case there is a unique irreducible representation of $\Cl_{n,1}$.
It can be endowed with the $\Z/2\Z$-grading induced by the volume form.
The \emph{classical hypersurface spinor bundle} is the $\Z/2\Z$-graded hypersurface spinor bundle associated to this representation.

\begin{Bem}
	Let us denote by $\Hyp M = \Hyp^+ M \oplus \Hyp^- M$ the decomposition of the classical hypersurface spinor bundle given by the $\Z/2\Z$-grading for $n$ odd.
	Then we can identify $\Hyp^- M \overset{\cong}{\to} \Hyp^+ M$ via $ie_0 \cdot$.
	The involution $e_0 \cdot$ then corresponds to the matrix
	\begin{align*}
		\begin{pmatrix}
			0 & -i \\ i & 0
		\end{pmatrix}.
	\end{align*}
	The Clifford multiplication by $X$ gets identified with
	\begin{align*}
		\begin{pmatrix}
			0 & ie_0 \cdot X \cdot \\ i e_0 \cdot X \cdot & 0
		\end{pmatrix}.
	\end{align*} 
	The operators $ie_0 \cdot X \cdot$ define a Clifford multiplication on $\Hyp^+ M$.
	In fact, $\Hyp^+ M \to M$ is associated to a $\Cl_n$-representation obtained by suitably restricting the irreducible $\Cl_{n,1}$-representation.
	For dimension reasons, this $\Cl_n$-representation is irreducible, so $\Hyp^+ M$ is associated to one of the two irreducible representations of $\Cl_n$.
	In the description of the classical hypersurface spinor bundle, we had been vague about what we mean by the volume element, which determines the decomposition $\Hyp M = \Hyp^+ M \oplus \Hyp^- M$.
	We choose it in such a way that $\Hyp^+ M \to M$ becomes isomorphic to the classical spinor bundle on $M$ considered in \cite[Ex.~2.6]{Cecchini.Zeidler:2024}, which appears to be characterized by $i^{\frac{n+1}{2}}e_1 \cdot \ldots e_n \cdot = \id$ for any positively oriented orthonormal frame $e_1, \ldots, e_n$.
	So the volume element we take is $i^{\frac{n+1}{2}} i e_0 \cdot e_1 \cdot \ldots i e_0 \cdot e_n =i^{\frac{n+1}{2} + 1}e_0 \cdot e_1 \cdot \ldots e_n$.
	We observe that then $\Hyp M \to M$ recovers the (untwisted) relative Dirac bundle of the cited example.
	Note that while the construction by Cecchini and Zeidler appears rather ad-hoc, the hypersurface spinor bundle has a geometric meaning.
	In particular, the involution (called $\sigma$ there) now naturally arises as Clifford multiplication with the unit normal.
\end{Bem}

Now we are ready to state the index theorem for the Dirac-Witten operator.
Note that the formula is specific for the classical hypersurface spinor bundle.
As a consequence, we obtain a criterion for the existence of non-trivial elements in the kernel of the Dirac-Witten operator, which also holds independently of the chosen hypersurface spinor bundle (and even when it is not explicitly $\Z/2\Z$-graded).

\begin{Satz}[Callias Index Theorem, {\cite[Cor.~3.10]{Cecchini.Zeidler:2024}}]
	The index of $\DW_\bch \colon H^1_\bch(\Hyp M) \to L^2(\Hyp M)$ is given by $\mathrm{ind}_{\Z/2\Z}(\DW_\bch) = \hat{A}(\del_- M)$.
\end{Satz}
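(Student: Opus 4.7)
The statement is essentially a translation of Cor.~3.10 of \cite{Cecchini.Zeidler:2023} into the language of this paper, so the plan is to match the setup rather than to reprove a boundary index theorem from scratch.

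First, I would identify the relative Dirac bundle. Via the isomorphism $ie_0\cdot\colon \Hyp^- M \xrightarrow{\cong} \Hyp^+ M$ highlighted in the remark above, the summand $\Hyp^+ M$ is the classical spinor bundle of $(M,g)$, and $(\Hyp M, e_0\cdot)$ recovers the untwisted relative Dirac bundle of \cite[Ex.~2.6]{Cecchini.Zeidler:2023}, with the involution $e_0\cdot$ playing the role of their symmetry $\sigma$. Under these identifications, $\DW = D - \frac{1}{2}\tr^g(k)\,e_0\cdot$ is their Callias operator associated to the potential $-\frac{1}{2}\tr^g(k)$, and $\bch = s\tilde\nu\cdot e_0\cdot$ together with the condition $\bch \phi_{|\del M} = \phi_{|\del M}$ is exactly the chirality boundary condition used in their analytic framework. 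Here the sign $s = \pm 1$ records which of the two possible chirality conventions is used on each boundary component.

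Second, I would invoke their Cor.~3.10, which expresses the Fredholm index of the Callias operator with chirality boundary conditions as the $\hat A$-genus of the boundary piece on which the chirality sign takes the distinguished value. With the present conventions $s = +1$ on $\del_+ M$ and $s = -1$ on $\del_- M$, the boundary component singled out by their formula is precisely $\del_- M$, yielding $\mathrm{ind}(\DW_\bch) = \hat A(\del_- M)$. I expect no substantive obstacle here, since the construction of $\DW$ and of $\bch$ has been arranged so as to slot directly into their hypotheses.

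The one nontrivial point, and the main obstacle in writing the proof out carefully, is the sign bookkeeping needed to ensure that the answer is $+\hat A(\del_- M)$ rather than $-\hat A(\del_- M)$ or a combination involving $\hat A(\del_+ M)$. This requires tracing through the Clifford and orientation conventions of \cite{Cecchini.Zeidler:2023} and comparing them with the sign of $e_0\cdot$ and the choice of inward-pointing $\nu$ used here. As a consistency check, homotopy invariance of the Fredholm index allows one to deform the initial data set to $k \equiv 0$, in which case $\DW$ reduces to the classical Dirac operator $D$ and the computation becomes the familiar index of $D$ with chirality boundary conditions on a spin band; matching this against the standard formula pins down the correct sign.
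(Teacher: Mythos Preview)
Your proposal is correct and matches the paper's approach: the paper does not give an independent proof but simply identifies $(\Hyp M, e_0\cdot)$ with the relative Dirac bundle of \cite[Ex.~2.6]{Cecchini.Zeidler:2023} (as in the preceding remark) and then cites \cite[Cor.~3.10]{Cecchini.Zeidler:2023} directly. Your additional remarks on sign bookkeeping and the $k\equiv 0$ consistency check go slightly beyond what the paper spells out, but they are in the same spirit and entirely appropriate.
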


\begin{Kor} \label{Cor:Callias}
	Assume that $\hat{A}(\del_- M) \neq 0$.
	Then there are non-trivial smooth \emph{Dirac-Witten harmonic spinors} subject to the chirality boundary condition, i.\,e.\ $\phi \in \Gamma(\Hyp M) \setminus \{0\}$ with $\DW \phi = 0$ and $\bch \phi_{|\del M} = \phi_{|\del M}$.
\end{Kor}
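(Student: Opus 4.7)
The plan is to deduce the corollary from the Callias Index Theorem just stated. First I would work with the classical hypersurface spinor bundle, where the index formula directly applies. Since by hypothesis $\hat{A}(\del_- M) \neq 0$, the Callias Index Theorem gives $\mathrm{ind}(\DW_\bch) \neq 0$. As a self-adjoint Fredholm operator on a $\Z/2\Z$-graded bundle, $\DW_\bch$ is odd with respect to the grading, so its index is the (super)difference $\dim \ker \DW_\bch^+ - \dim \ker \DW_\bch^-$. A non-zero index thus forces the existence of a non-trivial element $\phi \in \ker \DW_\bch \subseteq H^1_\bch(\Hyp M)$. By the remark following the Fredholm statement, one checks that the dimension parity is compatible: $\hat{A}(\del_- M) \neq 0$ forces $\dim \del_- M \equiv 0 \pmod 4$, hence $n$ odd, so that the additional symmetry killing the index in even dimensions is absent.

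Next I would promote $\phi$ from an $H^1$-section to a smooth one. Since $\DW$ is a first-order elliptic operator on a compact manifold with boundary and $\bch$ defines an elliptic self-adjoint boundary condition (this was verified in the preceding discussion, where $\bch$ was shown to be a self-adjoint involution anti-commuting with the principal symbol $\nu\cdot$ of $\DW$ and with the adapted boundary operator $\bDW$), standard elliptic regularity for boundary value problems (as used by Bär--Ballmann and Cecchini--Zeidler) upgrades any $L^2$-solution of $\DW \phi = 0$ with $\bch \phi_{|\del M} = \phi_{|\del M}$ to a smooth one.

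Finally I would extend the statement to a general, not necessarily $\Z/2\Z$-graded, hypersurface spinor bundle. The representation theory of $\Cl_{n,1}$ implies that for $n$ odd, every complex $\Cl_{n,1}$-representation is a direct sum of copies of the unique irreducible one (with its canonical grading). Consequently any hypersurface spinor bundle $\Hyp M$ decomposes as a finite direct sum of copies of the classical one in a way compatible with Clifford multiplication, $e_0\cdot$, scalar product, and $\overline{\nabla}$. The Dirac-Witten operator and the chirality boundary condition split accordingly, so a non-trivial classical solution yields a non-trivial solution on any fixed summand and therefore on $\Hyp M$ itself.

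The main obstacle I anticipate is not analytic — the index theorem and elliptic regularity do the heavy lifting — but rather checking that the reduction to the classical bundle really captures \emph{every} hypersurface spinor bundle in the sense of \cref{Set:HypSpin}. One has to confirm that the Clifford module structure, the involution $e_0\cdot$, and the compatibility of scalar product and connection are preserved under the splitting into irreducible summands. Given the explicit construction of $\Hyp M$ by associating a $\Cl_{n,1}$-representation to $P_{\Spin(n)}M$ via restriction, this is routine but needs to be spelled out.
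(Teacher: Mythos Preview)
Your proposal is correct and is precisely what the paper has in mind: the corollary is stated without proof immediately after the Callias Index Theorem, and the paragraph preceding that theorem already signals the two ingredients you spell out---restricting to the classical ($\Z/2\Z$-graded, irreducible) hypersurface spinor bundle in odd dimensions, and noting that the conclusion then holds for any hypersurface spinor bundle by decomposing into irreducibles. Your handling of the parity argument, elliptic regularity, and the passage to general $\Hyp M$ matches the paper's implicit reasoning.
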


\section{The kernel of Dirac-Witten operators} \label{Sec:KerDW}
In this section, we investigate some consequences of non-zero kernel of the Dirac-Witten operator on a compact spin manifold with boundary.
To a large extent this discussion is similar to the closed case that was treated in \cite{Ammann.Gloeckle:2023}.

The general setup for this section is the following.
We consider an initial data set $(g,k)$ on a compact spin manifold $M$ with boundary $\del M = \del_+ M \dotcup \del_- M$.
We denote by $\tilde{\nu}$ the unit normal on $\del M$ that is inward-pointing along $\del_+ M$ and outward-pointing along $\del_- M$.
The function $s$ will be defined to be $+1$ on $\del_+ M$ and $-1$ on $\del_- M$.
Furthermore, we consider a hypersurface spinor bundle $\Hyp M$ on $M$ with its connection $\overline{\nabla}_X \phi = \nabla_X \phi - \frac{1}{2} e_0 \cdot k(X,\blank)^\sharp \cdot \phi$ associated to $(g,k)$ (cf.~\cref{Set:HypSpin}).
Its Dirac-Witten operator with respect to chirality boundary conditions $\tilde{\nu} \cdot e_0 \cdot \phi_{|\del M} = \phi_{|\del M}$ will be denoted by $\DW_\bch$.

\begin{Prop} \label{Prop:Kernel}
Let $M$ be as above and $(g,k)$ an initial data set on $M$.
We assume that it is subject to the dominant energy condition $\rho \geq |j|_g$ and that the future outgoing null expansion scalar $\theta^+ = \tr^{\del M}(\nabla \tilde{\nu}) + \tr^{\del M}(k)$ (with respect to $\tilde\nu$) satisfies $\theta^+ \leq 0$ on $\del_+M$ and $\theta^+ \geq 0$ on $\del_- M$.
Then any $\phi \in \ker(\DW_\bch)$ is $\overline{\nabla}$-parallel and satisfies $(\rho e_0 - j^\sharp) \cdot \phi = 0$.
If, moreover, $M$ is connected and $\phi \not\equiv 0$, then $\phi$ is nowhere vanishing, $\rho = |j|_g$ and $\theta^+ = 0$.
\end{Prop}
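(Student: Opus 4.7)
The strategy is to apply the integrated Schrödinger-Lichnerowicz identity of \cref{Prop:IntSL} to $\phi \in \ker(\DW_\bch)$ and observe that, under the stated hypotheses, each of the three contributions to its right-hand side is non-negative. Since the left-hand side $\|\DW\phi\|_{L^2(M)}^2$ vanishes, all three must vanish separately, and the conclusions fall out almost immediately.

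For the bulk curvature term $\frac{1}{2}(\phi, (\rho - e_0 \cdot j^\sharp \cdot)\phi)_{L^2(M)}$, note that $e_0 \cdot j^\sharp \cdot$ is self-adjoint (a product of the self-adjoint $e_0 \cdot$ and the skew-adjoint $j^\sharp \cdot$, which anticommute) and squares to $|j|_g^2 \cdot \id$ using $e_0 \cdot e_0 = 1$ and $j^\sharp \cdot j^\sharp = -|j|_g^2$; hence it has eigenvalues $\pm |j|_g$ and $\rho - e_0 \cdot j^\sharp \cdot$ has eigenvalues $\rho \mp |j|_g$, both $\geq 0$ under DEC. For the boundary term I use the chirality boundary condition $\bch \phi_{|\del M} = \phi_{|\del M}$ twice. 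First, since $\bDW$ anticommutes with the self-adjoint involution $\bch$, one computes $(\phi, \bDW\phi) = (\bch\phi, \bDW \bch\phi) = -(\bch^2\phi, \bDW\phi) = -(\phi, \bDW\phi)$, so that contribution vanishes. Second, $\bch\phi = \phi$ eliminates the involution in the remaining piece. A short calculation using \eqref{eq:2FF} yields $\tr^{\del M}(\overline{\nabla}\tilde{\nu}) = \tr^{\del M}(\nabla\tilde{\nu})$ (the $e_0$-component drops out of the tangential trace) and $\tr^{\del M}(\overline{\nabla}e_0) = \tr^{\del M}(k)$ (by metric compatibility applied to $\overline g(e_0, e_i) = 0$). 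The boundary bracket thereby collapses to $-\theta^+$, so the boundary integrand simplifies to $-\tfrac{1}{2}s\theta^+|\phi|^2$. With $s = +1$ on $\del_+M$ (where $\theta^+ \leq 0$) and $s = -1$ on $\del_-M$ (where $\theta^+ \geq 0$), one has $s\theta^+ \leq 0$ throughout $\del M$, making the integrand pointwise non-negative.

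Summing to zero then forces $\overline{\nabla}\phi \equiv 0$, while pointwise vanishing of $(\phi, (\rho - e_0 \cdot j^\sharp\cdot)\phi)$ on the non-negative endomorphism forces $\phi$ to lie in its kernel at every point, so $(\rho - e_0 \cdot j^\sharp)\cdot\phi = 0$, which multiplied by $e_0$ becomes $(\rho e_0 - j^\sharp)\cdot\phi = 0$. For the moreover part, a $\overline{\nabla}$-parallel spinor satisfies a linear first-order ODE along every curve and so vanishes identically on a connected manifold once it vanishes at a single point; hence $\phi$ is nowhere zero. Squaring $(\rho e_0 - j^\sharp)\cdot\phi = 0$ with the same Clifford relations gives $(\rho^2 - |j|_g^2)\phi = 0$, whence $\rho^2 = |j|_g^2$, and DEC pins down $\rho = |j|_g$. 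Finally, the vanishing of the boundary integrand combined with $|\phi|^2 > 0$ on $\del M$ forces $s\theta^+ \equiv 0$, hence $\theta^+ \equiv 0$ on $\del M$. The only delicate step in the whole argument is the boundary bookkeeping---arranging the signs of the $k$-correction in $\overline{\nabla}$, the involution identity $(\phi,\bDW\phi)=0$, and the sign convention $s$ to produce exactly $-s\theta^+$---but no deeper difficulty arises beyond what \cref{Prop:IntSL} already encodes.
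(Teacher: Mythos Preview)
Your proof is correct and follows essentially the same route as the paper: apply \cref{Prop:IntSL}, use the chirality condition and anti-commutation with $\bDW$ to reduce the boundary term to $-\tfrac12 s\theta^+|\phi|^2$, and observe that all three summands are non-negative under the hypotheses. The only minor deviation is that you extract $(\rho - e_0\cdot j^\sharp\cdot)\phi = 0$ via the positive-semidefinite kernel argument and then obtain $\rho = |j|_g$ by squaring, whereas the paper instead reapplies the pointwise Schr\"odinger--Lichnerowicz formula $\DW^2\phi - \overline{\nabla}^*\overline{\nabla}\phi = 0$ to get the Clifford equation directly; both are standard and equally short.
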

\begin{proof}
We consider the integrated Schrödinger-Lichnerowicz type formula from \cref{Prop:IntSL}
\begin{align*}
		\|\DW \phi\|_{L^2(M)}^2 &= \|\overline{\nabla} \phi\|_{L^2(M)}^2 + \frac{1}{2}\left( \phi, (\rho - e_0 \cdot j^\sharp \cdot) \phi \right)_{L^2(M)} + \left( \phi_{|\del M}, -\bDW  \phi_{|\del M} \right)_{L^2(\del M)} \\
&\phantom{=}\,+ \frac{1}{2} \left(\phi_{|\del M}, s \left( \tr^{\del M}(-\overline{\nabla}\tilde{\nu}) -  \tr^{\del M}(\overline{\nabla}e_0) \bch \right) \phi_{|\del M} \right)_{L^2(\del M)}.
\end{align*}
As $\phi$ is subject to the boundary condition $\bch \phi_{|\del M} = \phi_{|\del M}$ and $\bDW$ anti-commutes with $\bch$, we have
\begin{align*}
	\left( \phi_{|\del M}, -\bDW  \phi_{|\del M} \right)_{L^2(\del M)} &= \left( \phi_{|\del M}, -\bDW  \bch \phi_{|\del M} \right)_{L^2(\del M)} = \left( \phi_{|\del M}, \bch  \bDW \phi_{|\del M} \right)_{L^2(\del M)}\\
	&= \left(\bch \phi_{|\del M}, \bDW  \phi_{|\del M} \right)_{L^2(\del M)} = \left(\phi_{|\del M}, \bDW  \phi_{|\del M} \right)_{L^2(\del M)}
\end{align*}
and hence the third summand on the right is zero.
Recalling in addition that $\theta^+ = \tr^{\del M}(\nabla \tilde{\nu}) + \tr^{\del M}(\overline{\nabla} e_0)$ (cf.~\cref{Rem:NablaE0}), the formula simplifies to
\begin{align*}
\|\DW \phi\|^2_{L^2(M)} &= \|\overline{\nabla} \phi\|^2_{L^2(M)} + \frac{1}{2}  \left( \phi, (\rho - e_0 \cdot j^\sharp \cdot) \phi \right)_{L^2(M)}  + \frac{1}{2} \left( \phi_{|\del M}, -s \theta^+ \phi_{|\del M} \right)_{L^2(\del M)}.
\end{align*}
Clearly, the first term is always non-negative, the second one is non-negative if the dominant energy condition holds and the third term is non-negative by our assumptions as well.
Hence, all these terms must be zero if $\DW \phi = 0$, in particular $\overline{\nabla} \phi = 0$.
It then follows from the Schrödinger-Lichnerowicz formula that $\frac{1}{2}(\rho - e_0 \cdot j^\sharp \cdot) \phi = \DW^2 \phi - \overline{\nabla}^* \overline{\nabla} \phi = 0$.

If $M$ is connected and $\phi \not\equiv 0$, then $\overline{\nabla}$-parallelism of $\phi$ implies that $\phi$ is nowhere vanishing.
Then we get $\rho = |j|_g$ and $\theta^+ = 0$ since the latter two terms in the equation above are zero and $\rho \geq |j|_g$ and $-s\theta^+ \geq 0$, respectively.
\end{proof}

Even more can be deduced by looking at the Dirac current of $\phi$.
To define it, we use the Lorentzian metric $\overline{g}$ on $TM \oplus \R e_0$ defined by $\overline{g}(U+u e_0, U^\prime + u^\prime e_0) = g(U, U^\prime) -u u^\prime$ for all $U, U^\prime \in T_pM$, $p \in M$ and $u, u^\prime \in \R$.
\begin{Def} \label{Def:DirCurr}
The \emph{(Lorentzian) Dirac current} associated to $\phi \in \Hyp_pM$, $p \in M$, is the vector $V_\phi \in T_pM \oplus \R e_0$ uniquely determined by the condition
\begin{align*}
\overline{g}(V_\phi, X) = - \langle e_0 \cdot X \cdot \phi, \phi \rangle
\end{align*}
for all $X \in T_pM \oplus \R e_0$.
Its \emph{Riemannian Dirac current} $U_\phi \in T_pM$ is defined by
\begin{align*}
g(U_\phi, X) = \langle e_0 \cdot X \cdot \phi, \phi \rangle
\end{align*}
for all $X \in T_pM$.
\end{Def}
Since $\overline{g}(V_\phi,X) = \overline{g}(-U_\phi,X)$ for all $X \in TM$ and $\overline{g}(V_\phi,e_0) = -\langle e_0 \cdot e_0 \cdot \phi, \phi \rangle = -|\phi|^2$, the Lorentzian Dirac current splits up as $V_\phi = u_\phi e_0 - U_\phi$ with $u_\phi = |\phi|^2$.
Thus $V_\phi$ is zero if and only if $\phi = 0$.
Moreover, a short calculation shows $|V_\phi \cdot \phi|^2 = -\overline{g}(V_\phi,V_\phi)|\phi|^2$ (cf.\ e.\,g.~\cite[Lem.~1.3.8]{Gloeckle:2024_b}).
Hence if $\phi \neq 0$, then $V_\phi$ is either future-timelike or future-lightlike.
In the latter case, additionally $V_\phi \cdot \phi = 0$ holds.

If $\phi \in \Gamma(\Hyp M)$ is $\overline{\nabla}$-parallel, then $\overline{\nabla} V_\phi = 0$ or, equivalently,
\begin{equation} \label{eq:VPar}
	\begin{aligned}
		\nabla_X U_\phi &= u_\phi k(X,\blank)^\sharp \\
		\upd u_\phi(X) &= k(U_\phi,X)
	\end{aligned}
\end{equation}
for all $X \in \Gamma(TM)$.
In particular, whether $V_\phi$ is zero, timelike or lightlike will not change on a connected component of $M$.
\begin{Def}
A $\overline{\nabla}$-parallel spinor $\phi \in \Gamma(\Hyp M)$ is called \emph{timelike} or \emph{lightlike} if $V_\phi$ is timelike or lightlike on all of $M$, respectively.
\end{Def}

\begin{Prop} \label{Prop:KernelCurrent}
Let $(g,k)$ be an initial data set on a compact connected spin manifold $M$ with non-empty boundary $\del M = \del_+ M \dotcup \del_-M$.
As above, we consider the unit normal $\tilde{\nu}$ on $\del M$ that is inward-pointing along $\del_+ M$ and outward-pointing along $\del_- M$.
Assume that $\phi$ is a non-zero $\overline{\nabla}$-parallel spinor on $M$ subject to chirality boundary conditions.
Then $\phi$ is a lightlike $\overline{\nabla}$-parallel spinor.
Its Riemannian Dirac current $U_\phi$ satisfies  $j^\sharp = \frac{\rho}{u_\phi} U_\phi$ on all of $M$ and $\tilde{\nu} = -\frac{1}{u_\phi} U_\phi$ on the boundary $\del M$.
Moreover, $U_\phi$ is a non-vanishing vector field with $\upd U_\phi^\flat = 0$.
In particular, $\ker(U_\phi^\flat)$ defines an involutive distribution and hence a foliation of $M$ by Frobenius' theorem.
The leaves may be co-oriented by the unit normal $\tilde{\nu} = -\frac{1}{u_\phi}U_\phi$, and then the future outgoing null second fundamental form $\chi^+ = \nabla \tilde{\nu}^\flat + k$ satisfies $\chi^+ = 0$, in particular the leaves are MOTS.
On the leaves, the restriction of $\frac{\phi}{|\phi|}$ is parallel with respect to the Levi-Civita connection of the induced metric.
In particular, the induced metric on every leaf is Ricci-flat.
\end{Prop}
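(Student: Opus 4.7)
The plan is to use the chirality boundary condition to force $\phi$ to be lightlike, and then read off the remaining geometric content from the parallelism formulas $\nabla_X U_\phi = u_\phi k(X,\blank)^\sharp$ and $\upd u_\phi(X) = k(U_\phi, X)$ together with the algebraic identity $V_\phi \cdot \phi = 0$ that characterises lightlikeness. As a first step I would evaluate $g(U_\phi, \tilde\nu)$ along $\del M$: the chirality condition $\tilde\nu \cdot e_0 \cdot \phi = \phi$ combined with the definition of the Riemannian Dirac current immediately yields $g(U_\phi, \tilde\nu) = -u_\phi$. Since $V_\phi = u_\phi e_0 - U_\phi$ is causal one has $|U_\phi|_g \leq u_\phi$, and Cauchy--Schwarz then forces $U_\phi = -u_\phi \tilde\nu$ on $\del M$, so that $V_\phi$ is lightlike there; because $\overline\nabla V_\phi = 0$ and $M$ is connected, the causal type is constant and thus $\phi$ is lightlike throughout $M$. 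It is here that non-emptiness of the boundary is crucial.

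\textbf{Identity for $j^\sharp$; foliation and vanishing $\chi^+$.} Lightlikeness gives $V_\phi \cdot \phi = 0$ and hence the Clifford identity $U_\phi \cdot \phi = u_\phi\, e_0 \cdot \phi$ on all of $M$. The Schrödinger--Lichnerowicz formula~\eqref{eq:SL} applied to the $\overline\nabla$-parallel (in particular Dirac--Witten harmonic) spinor $\phi$ yields $(\rho e_0 - j^\sharp) \cdot \phi = 0$; substituting the previous identity and using that Clifford multiplication by a nonzero tangent vector is invertible gives $j^\sharp = \frac{\rho}{u_\phi} U_\phi$. The parallelism formulas combined with symmetry of $k$ deliver $\upd U_\phi^\flat = 0$, and since $|U_\phi|_g = u_\phi = |\phi|^2 > 0$ we obtain a nowhere vanishing closed $1$-form, so $\ker U_\phi^\flat$ is an integrable distribution co-oriented by $\tilde\nu = -u_\phi^{-1} U_\phi$. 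To verify $\chi^+ = 0$ it suffices to check $g(\nabla_X \tilde\nu, Y) = -k(X, Y)$ for $X, Y$ tangent to a leaf: the term involving $\upd u_\phi$ drops out thanks to $g(U_\phi, Y) = 0$, and the parallelism formula closes the computation.

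\textbf{Parallel spinor on the leaves; Ricci-flatness.} The most delicate point is to show that $\phi/|\phi|$ is parallel for the induced Levi-Civita connection $\nabla^{F_t}$ on a leaf $F_t$. I would combine three inputs: the comparison~\eqref{eq:SpBConn} relating $\nabla$ to the hypersurface connection of the leaf, the identity $\nabla_X \phi = -\tfrac12 k(X, \blank)^\sharp \cdot e_0 \cdot \phi$ coming from $\overline\nabla\phi = 0$ via~\eqref{eq:olnabla}, and the Clifford identity $\tilde\nu \cdot \phi = -e_0 \cdot \phi$ that already appeared in Step~2. After substitution the $k$-terms should cancel exactly and leave $\nabla^{F_t}_X \phi = \tfrac12 u_\phi^{-1} X(u_\phi)\, \phi$, which is precisely the logarithmic derivative of $|\phi| = \sqrt{u_\phi}$ and hence $\nabla^{F_t}(\phi/|\phi|) = 0$. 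Ricci-flatness of $F_t$ then follows by the standard argument that a parallel nowhere-vanishing section of a Clifford bundle forces the Ricci tensor to vanish, via the identity $\sum_i e_i \cdot R^{\nabla^{F_t}}(X, e_i) = -\tfrac12 \mathrm{Ric}(X)^\sharp \cdot$ acting on spinors.

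The main conceptual obstacle is the lightlikeness step: recognising that the chirality boundary condition algebraically forces the Dirac current to be null on $\del M$, and therefore everywhere by $\overline\nabla$-parallel transport. This is what distinguishes the present setting from the closed case of~\cite{Ammann.Gloeckle:2023}, where both causal types may occur. The most calculation-intensive step is the Clifford bookkeeping yielding $\nabla^{F_t}(\phi/|\phi|) = 0$, but once the three identities above are lined up the cancellations are essentially mechanical.
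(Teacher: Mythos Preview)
Your proposal is correct and follows the same overall architecture as the paper's proof: establish lightlikeness from the boundary condition, derive the algebraic identities for $U_\phi$ and $j^\sharp$, check $\upd U_\phi^\flat=0$ and $\chi^+=0$ from the parallelism formulas, and finally compute $\nabla^{F_t}(\phi/|\phi|)=0$ using~\eqref{eq:SpBConn} together with $\tilde\nu\cdot\phi=-e_0\cdot\phi$.

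The one noteworthy difference is tactical. The paper isolates a single algebraic lemma---any $V\in T_pM\oplus\R e_0$ with $V\cdot\phi=0$ is a scalar multiple of $V_\phi$---and applies it twice: once to $(e_0+\tilde\nu)\cdot\phi=0$ on $\del M$ to get lightlikeness and $U_\phi=-u_\phi\tilde\nu$, and once to $(\rho e_0-j^\sharp)\cdot\phi=0$ to get $j^\sharp=\tfrac{\rho}{u_\phi}U_\phi$. You replace both applications by direct arguments: Cauchy--Schwarz on $g(U_\phi,\tilde\nu)=-u_\phi$ for the first, and the substitution $e_0\cdot\phi=u_\phi^{-1}U_\phi\cdot\phi$ plus invertibility of Clifford multiplication by nonzero \emph{tangent} vectors for the second. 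Both routes are equally short; the paper's lemma has the advantage of being reusable, while your approach makes the role of the boundary condition slightly more transparent. One small wording issue: in your leaf computation the $k$-terms do not ``cancel exactly''---a residual $-\tfrac12 k(X,\tilde\nu)\phi$ survives, and it is precisely this term that equals $\tfrac12 u_\phi^{-1}X(u_\phi)\phi$ via $\upd u_\phi(X)=k(U_\phi,X)$.
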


We use the following lemma.
\begin{Lem}
Assume $V \in T_pM \oplus \R e_0$ and $\phi \in \Hyp_p M \setminus \{0\}$ is some spinor with $V \cdot \phi = 0$.
Then $V$ is a scalar multiple of $V_\phi$.
If additionally $V \neq 0$, then $V_\phi$ is lightlike.
\end{Lem}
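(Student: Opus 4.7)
The plan is to reduce the claim to two standard facts: the Clifford identity $V \cdot V = -\overline{g}(V,V)\,\id$ on $\Hyp_p M$, and the Lorentzian observation that two non-zero future-lightlike vectors which are $\overline{g}$-orthogonal must be proportional. First I would write $V = ue_0 + U$ with $U \in T_pM$ and $u \in \R$ and use that $e_0 \cdot$ is an involution anti-commuting with the $TM$-Clifford multiplication, together with $U \cdot U = -g(U,U)\,\id$, to obtain
\[
V \cdot V = u^2\,\id - g(U,U)\,\id = -\overline{g}(V,V)\,\id.
\]
Applying this to $\phi$ and using $V \cdot \phi = 0$ forces $\overline{g}(V,V)\,\phi = 0$; since $\phi \neq 0$, this gives $\overline{g}(V,V) = 0$. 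So $V$ is either zero -- in which case the first claim is trivial -- or a non-zero lightlike vector.

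In the latter case, I would next exploit the defining formula of the Dirac current:
\[
\overline{g}(V_\phi, V) = -\langle e_0 \cdot V \cdot \phi, \phi\rangle = 0,
\]
which shows $V_\phi \in V^\perp$. Recall that $V_\phi = u_\phi e_0 - U_\phi$ with $u_\phi = |\phi|^2 > 0$, so $V_\phi$ is non-zero and already known from the discussion following \cref{Def:DirCurr} to be future-timelike or future-lightlike. If it were timelike, its orthogonal complement would be a spacelike hyperplane and could not contain the non-zero lightlike vector $V$; hence $V_\phi$ must be future-lightlike. This already proves the second assertion of the lemma.

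To conclude that $V$ is a scalar multiple of $V_\phi$, I would invoke the fact that the induced bilinear form on the $n$-dimensional subspace $V^\perp$ is positive semidefinite with kernel precisely $\spn(V)$; consequently any lightlike vector inside $V^\perp$ must lie in $\spn(V)$. Applied to $V_\phi \in V^\perp$ this yields $V_\phi = \mu V$ for some non-zero scalar $\mu$, and inverting gives $V = \mu^{-1} V_\phi$, as required. The argument is essentially linear algebra once the Clifford sign conventions of $\Cl_{n,1}$ (namely $e_0 \cdot e_0 = \id$ reflecting that $e_0$ is timelike, and $U \cdot U = -g(U,U)\,\id$ for $U \in T_pM$) are correctly set up; the main obstacle is really only bookkeeping of signs rather than anything conceptual.
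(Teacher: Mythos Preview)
Your argument is correct and follows essentially the same route as the paper: both proofs compute $\overline{g}(V,V)=0$ from $V\cdot V\cdot\phi=0$, observe $\overline{g}(V_\phi,V)=-\langle e_0\cdot V\cdot\phi,\phi\rangle=0$, and then finish with Lorentzian linear algebra. The only cosmetic difference is that the paper packages the last step by noting that $\overline{g}$ is negative semi-definite on $L=\spn(V,V_\phi)$, forcing $\dim L\le 1$, whereas you argue via the structure of $V^\perp$; these are equivalent formulations of the same fact.
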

\begin{proof}
We have $\overline{g}(V_\phi,V_\phi) \leq 0$, $ \overline{g}(V,V) \phi = -V \cdot V \cdot \phi = 0$ and $\overline{g}(V_\phi,V) = -\langle e_0 \cdot V \cdot \phi, \phi \rangle = 0$.
Hence, $\overline{g}_{|L \times L}$ is negative semi-definite for $L = \spn(V, V_\phi) \subseteq T_pM \oplus \R e_0$.
Since $\overline{g}$ is a Lorentzian metric, the dimension of $L$ can be at most one, yielding the first part.
If now $V \neq 0$, then $L$ is a one-dimensional lightlike subspace and the rest of the claim follows.
\end{proof}

\begin{proof}[Proof of \Cref{Prop:KernelCurrent}]
On $\del M$ the boundary condition yields $(e_0 + \tilde{\nu}) \cdot \phi = 0$.
Using the previous lemma and $\del M \neq \emptyset$ we obtain that the $\overline{\nabla}$-parallel spinor $\phi$ is lightlike with $V_\phi = u_\phi (e_0 + \tilde{\nu})$ along the boundary.
A further application of the lemma, this time to $(\rho e_0 - j^\sharp) \cdot \phi = 2e_0 \cdot (\DW^2 \phi - \overline{\nabla}^*\overline{\nabla} \phi) = 0$, yields $\rho e_0 - j^\sharp = \frac{\rho}{u_\phi} V_\phi$.

As $V_\phi$ is lightlike, $|U_\phi|_g = u_\phi = |\phi|^2$, so $U_\phi$ is nowhere vanishing.
From $\overline{\nabla}$-parallelism of $V_\phi$ (cf.~\eqref{eq:VPar}), we get
\begin{align*}
\upd U_\phi^\flat(X,Y) &=  \del_X (U_\phi^\flat(Y)) - \del_Y (U_\phi^\flat(X)) - U_\phi^\flat([X,Y]) \\
	&=g(\nabla_X U_\phi, Y) - g(\nabla_Y U_\phi, X) \\
	&= u_\phi k(X,Y) - u_\phi k(Y,X) = 0
\end{align*}
for all $X, Y \in \Gamma(TM)$.
The expression in the first line of this equation then helps to conclude that $\ker(U_\phi^\flat)$ is an involutive distribution of codimension one.

Let us now calculate $\chi^+$ of the leaves of the associated foliation, where the unit normal on the leaves is given by $\tilde{\nu} \coloneqq -\frac{1}{u_\phi} U_\phi$.
(On the boundary, this coincides with the previously defined $\tilde{\nu}$.)
We have
\begin{align*}
\nabla_X \tilde{\nu} &= \bafrac{1}{u_\phi^2} \upd u_\phi(X) U_\phi - \frac{1}{u_\phi} \nabla_X U_\phi \\
	&= \bafrac{1}{u_\phi^2} k(U_\phi,X) U_\phi - k(X,\blank)^\sharp \\
	&= -(k(X, \blank)^\sharp - k(X,\tilde{\nu}) \tilde{\nu})
\end{align*}
Thus $\chi^+(X,Y) = g(\nabla_X \tilde{\nu}, Y) + k(X,Y) = 0$ for all $X,Y$ tangential to the leaves of the foliation.

Let now $F$ be a leaf and $X \in TF$.
The Levi-Civita connection of $F$ induces on $\Hyp M_{|F}$ the connection $\nabla^F_X = \nabla_X + \frac{1}{2} \nabla_X \tilde{\nu} \cdot \tilde{\nu} \cdot$, cf.~\eqref{eq:SpBConn}.
Thus, using $\overline{\nabla}_X \phi = 0$ and $(e_0+ \tilde{\nu}) \cdot \phi = \frac{1}{u_\phi} V_\phi \cdot \phi = 0$, we obtain
\begin{align*}
\nabla^F_X \phi &= \nabla_X \phi + \frac{1}{2} \nabla_X \tilde{\nu} \cdot \tilde{\nu} \cdot \phi \\
	&= \overline{\nabla}_X \phi - \frac{1}{2} k(X,\blank)^\sharp \cdot e_0 \cdot \phi 
		- \frac{1}{2} \left(k(X, \blank)^\sharp - k(X,\tilde{\nu}) \tilde{\nu} \right) \cdot \tilde{\nu} \cdot \phi \\
	&= -\frac{1}{2} k(X,\tilde{\nu}) \phi.
\end{align*}
Hence,
\begin{align*}
\nabla^F_X \frac{\phi}{|\phi|} &= \nabla^F_X \left(u_\phi^{-\frac{1}{2}} \phi \right) \\
	&= -\frac{1}{2} u_\phi^{-\frac{3}{2}} \upd u_\phi(X) \phi - \frac{1}{2}u_\phi^{-\frac{1}{2}} k(X, \tilde{\nu}) \phi \\
	&= 0
\end{align*}
as desired.
\end{proof}

In the next section, we will prove the following general fact about foliations.
Although its proof only uses elementary differentialgeometric methods, its setup of foliations on manifolds with boundary is rather special and not covered by standard textbooks.
 
\begin{Satz} \label{Thm:ProdStr}
	Let $M$ be a connected manifold with boundary $\del M = \del_+ M \dotcup \del_- M$, where $\del_+ M$ and $\del_- M$ are unions of components and $\del_+ M \neq \emptyset$.
	Let $U$ be a non-vanishing vector field on $M$ that is outward-pointing on $\del_+ M$ and inward-pointing on $\del_- M$.
	We assume that there exists a metric $g$ on $M$ such that $U$ is orthogonal to the boundary and its metric dual satisfies $\upd U^\flat = 0$.
	Then the flow of $X = -\bafrac{U}{|U|_g^2}$ defines a diffeomorphism
	\begin{align*}
	\Phi^\prime \colon \del_+ M \times [0,\ell] \to M
	\end{align*}
	for some $\ell > 0$.
	Moreover, $\Phi^\prime$ maps the leaves of the foliation $(\del_+ M \times \{t\})_{t \in \R}$ precisely to the leaves of the foliation defined by $U^\flat$.
\end{Satz}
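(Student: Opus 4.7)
The plan is to use $\omega := -U^\flat$ as a straightening device for $X$. The hypothesis $\upd U^\flat = 0$ gives $\upd \omega = 0$; the definition of $X$ gives $\omega(X) = U^\flat(U)/|U|_g^2 = 1$; and $U \perp \del M$ translates to $\omega|_{T\del M} = 0$. By Cartan's formula, $\mathcal{L}_X \omega = \upd(\iota_X \omega) + \iota_X \upd \omega = \upd(1) + 0 = 0$, so the flow $\psi_t$ of $X$ preserves $\omega$, and in particular sends leaves of the codimension-one foliation $\mathcal{F} := \ker \omega$ to leaves. Since $\omega|_{T\del M} = 0$, each connected component of $\del_+ M$ and of $\del_- M$ is itself a leaf of $\mathcal{F}$.

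First I would verify the basic flow properties for forward time starting at $p \in \del_+ M$: the trajectory enters $M^\circ$ immediately (because $X$ is inward on $\del_+ M$); it cannot return to $\del_+ M$ (if $\psi_t(p) \in \del_+ M$ for some $t > 0$, then for small $\epsilon > 0$ the point $\psi_{t - \epsilon}(p) = \psi_{-\epsilon}(\psi_t(p))$ would leave $M$, contradicting the orbit staying in $M$); and it reaches $\del_- M$ in finite time. For the last point, compactness together with $\psi_t$-invariance of $\omega$ and the leaf picture rules out an orbit staying in $M^\circ$ forever: any limit leaf of the family $(\psi_t(C))_t$ inside $M^\circ$ would have to be $\psi_s$-invariant for all $s$, which is absurd since $X$ is transverse to $\mathcal{F}$.

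The crux is the uniformity statement: the time $\tau_+(p)$ needed to reach $\del_- M$ is a constant $\ell$ independent of $p \in \del_+ M$. For this, fix a connected component $C$ of $\del_+ M$; since $\psi_t$ sends leaves to leaves and $C$ is a leaf, $\psi_t(C)$ is again a leaf for each $t$ in its existence interval. As soon as $\psi_t(C)$ meets the union of leaves $\del_- M$, it must coincide with a full component of $\del_- M$ (two leaves of a foliation are either disjoint or equal), so the entirety of $C$ hits $\del_- M$ simultaneously; hence $\tau_+ \equiv \ell_C$ on $C$. The tube $T_C := \psi_{[0, \ell_C]}(C)$ is compact, is closed in $M$, and is open in $M$ as well (at interior points a flow box neighbourhood lies inside $T_C$; at the boundary points of $T_C$, namely $C \subset \del_+ M$ and the full component $\psi_{\ell_C}(C) \subset \del_- M$, a one-sided flow box gives a neighbourhood in $M$). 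The tubes coming from different components of $\del_+ M$ are pairwise disjoint (distinct flow lines never meet), so they partition $M$ into clopen subsets; connectedness of $M$ then forces $\del_+ M$ to be a single component and $\tau_+ \equiv \ell$ globally.

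Finally, $\Phi'(p, t) := \psi_t(p)$ defines a smooth map $\del_+ M \times [0, \ell] \to M$ that is injective (distinct flow lines are disjoint and $\del_+ M$ is hit only at $t = 0$), a local diffeomorphism (since $X$ is transverse to the slices $\del_+ M \times \{t\}$), and surjective onto $M$ by the preceding tube-exhaustion argument. Hence $\Phi'$ is a diffeomorphism. It carries the canonical slice foliation to $\mathcal{F}$ because $\Phi'(\del_+ M \times \{t\}) = \psi_t(\del_+ M)$ is a leaf of $\mathcal{F}$ by $\psi_t$-invariance of $\omega$. The hard part is the uniformity of $\tau_+$: ruling out different components of $\del_+ M$ reaching $\del_- M$ at different times hinges on the foliation-theoretic rigidity that boundary leaves are full connected components, together with connectedness of $M$.
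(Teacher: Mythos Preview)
Your overall strategy—using $\mathcal{L}_X\omega = 0$ to see that the flow of $X$ carries leaves of $\mathcal{F}=\ker\omega$ to leaves, identifying boundary components as leaves, and then arguing that the tube swept out by a component $C\subseteq\del_+M$ is clopen—is sound and in fact slightly cleaner than the paper's route. The paper proceeds more indirectly: it first proves a preliminary result valid for \emph{any} vector field transverse to the boundary, establishing that the time function $f$ on the flow image $M'$ is proper, and then uses a clopen argument in foliation charts (with properness of $f$ supplying closedness) to conclude $M'=M$. Your Cartan-formula observation $\mathcal{L}_X\omega=0$ packages the paper's computation $\del_X g(U,\widetilde Y)=0$ more conceptually.

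However, your finite-time argument has a genuine gap. You assert that if an orbit from $C$ existed for all $t\geq 0$, then ``any limit leaf of the family $(\psi_t(C))_t$ inside $M^\circ$ would have to be $\psi_s$-invariant for all $s$''. This is not justified. The $\omega$-limit set of the family is indeed flow-invariant \emph{as a set}, but it need not be a single leaf, and an individual leaf $L$ contained in it is carried by $\psi_s$ to another leaf $\psi_s(L)$ in the limit set—there is no reason $\psi_s(L)=L$. Your contradiction (``$X$ is transverse to $\mathcal{F}$'') only rules out a single flow-invariant leaf, not a flow-invariant lamination.

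A correct replacement is close to what you already do for the tube $T_C$ afterwards, just run with $T=\sup\tau_+$ possibly infinite: the set $W=\bigcup_{0\le t<T}\psi_t(C)$ is open (flow-box neighbourhoods near each compact leaf $\psi_t(C)$), and it is also closed, because near any compact leaf the flow gives a product chart $L\times(-\delta,\delta)$ in which leaves are slices; a limit point $q$ of $W$ lies in such a chart around some nearby $\psi_{t_n}(C)$, forcing the leaf through $q$ to equal $\psi_{t_n-s_n}(C)\subseteq W$. Connectedness then gives $W=M$, and if $T=\infty$ this makes $M\cong C\times[0,\infty)$, contradicting compactness. This is essentially the paper's properness-plus-clopen argument transposed into your leaf language. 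Once finite time is secured, your uniformity and surjectivity steps go through as written.
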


With this result at hand, we can prove the main theorems of this article.
\begin{proof}[Proof of \Cref{Thm:Main} and \cref{Add:Main}]
	Since $\hat{A}(\del_- M) \neq 0$, the dimension of $\del_- M$ is even and the one of $M$ is odd.
	Let $\Hyp M \to M$ be the irreducible hypersurface spinor bundle on $M$.
	It follows from the Callias index theorem (cf.~\cref{Cor:Callias}) that there is a spinor $\phi \in \Gamma(\Hyp M) \setminus \{0\}$ with $\DW \phi = 0$ and $\bch \phi_{|\del M} = \phi_{|\del M}$ for the initial data set $(g,k)$.
	From \Cref{Prop:Kernel}, we get that $\phi$ is a non-zero $\overline{\nabla}$-parallel spinor.
	
	Now note that $\del_+ M \neq \emptyset$, as otherwise $\del_- M$ would be spin zero-bordant and $\hat{A}(\del_- M) = 0$.
	We apply \cref{Prop:KernelCurrent} to $\phi$ and obtain that $\phi$ is a lightlike $\overline{\nabla}$-parallel spinor and its Riemannian Dirac current $U_\phi$ is nowhere vanishing, outward-pointing along $\del_+ M$, inward-pointing along $\del_- M$ and satisfies $\upd U_\phi^\flat = 0$.
	Now \cref{Thm:ProdStr} provides us with a diffeomorphism $\Phi^\prime \colon \del_+ M \times [0,\ell] \to M$.
	Using the identification $\del_+ M \cong \Phi^\prime(\del_+ M \times \{\ell\}) = \del_- M$, $p \mapsto \Phi^\prime(p,\ell)$, we obtain a diffeomorphism $\Phi \colon \del_- M \times [0,\ell] \to M$ and we claim that this has all the desired properties.
	
	By construction, $(\Phi(\del_- M \times \{t\}))_{t \in [0, \ell]}$ coincides with the foliation defined by $U_\phi^\flat$.
	This directly shows the claim of the addendum and allows us to make use of the properties derived in \cref{Prop:KernelCurrent}:
	Those are that each leaf satisfies $\chi^+ = 0$ with respect to the unit normal $\tilde{\nu} = -\frac{1}{u_\phi} U_\phi$, carries the parallel spinor $\frac{\phi}{|\phi|}$ and is orthogonal to $j^\sharp = -\rho \tilde{\nu}$.	
	This is all that was left to show for the theorem.
\end{proof}

\begin{proof}[Proof of \cref{Thm:Riem}]
	We consider the initial data set $(g, -(h \circ s)\cdot g)$ on $M$.
	For this initial data set, we have
	\begin{align*}
	 	2\rho &= \scal^g + n(n-1) (h \circ s)^2 \quad\text{and} \\
	 	j &= (n-1) (h^\prime \circ s) \upd s.
	 \end{align*}
	 Since
	 \begin{gather*}
	 	|j|_g = (n-1) |h^\prime \circ s| |\upd s|_g \leq -(n-1)(h^\prime \circ s),
	 \end{gather*}
	the inequality for $\scal^g$ implies the dominant energy condition $\rho \geq |j|_g$.
	Moreover, keeping in mind that $H^g$ is defined with respect to the inward-pointing unit normal $\nu$, we have
	\begin{align*}
		\theta^+ &= \tr^{\del_+ M} (\nabla \tilde{\nu} + k) = (n-1) (-H^g - h(0)) \leq 0 &\text{on}\;&\del_+ M \;\;\text{and} \\
		\theta^+ &= \tr^{\del_- M} (\nabla \tilde{\nu} + k) = (n-1) \!\!\phantom{-}(H^g - h(L))\,\, \geq 0 &\text{on}\;&\del_- M.
	\end{align*}
	Hence, the assumptions of \cref{Thm:Main} are satisfied for this initial data set.
	
	We get a diffeomorphism $\tilde{\Phi} \colon \del_- M \times [0, \tilde{\ell}]$ inducing the foliation defined by $U_\phi^\flat$ for a lightlike $\overline{\nabla}$-parallel spinor $\phi$.
	If $X$ is a vector tangential to the leaves, then $\upd u_\phi(X) = k(U_\phi, X) = -(h \circ s)g(U_\phi, X) = 0$, so $u_\phi$ is constant along the leaves.
	Hence, we can reparameterize the second factor to obtain a diffeomorphism $\Phi \colon \del_- M \times [0,\ell] \to M$ such that $\Phi_*(\frac{\del}{\del t}) = \tilde{\nu}$.
	Then $\Phi^* g = \gamma_t + \upd t^2$ for a family $(\gamma_t)_{t \in [0,\ell]}$ of metrics on $\del_- M$.
	
	\Cref{Thm:Main} provides us with more knowledge about the leaves $F_t = \Phi(\del_- M \times \{t\})$.
	First, $j^\sharp = - \rho \tilde{\nu}$ shows that $(n-1)\del_X(h \circ s) = j(X) = 0$ for all $X \in TF_t$.
	Thus $h \circ s$ is constant along the leaves and we may define $(h_t)_{t \in [0, \ell]}$ by $\{h_t\} = (h \circ s)(F_t)$ for all $t \in [0, \ell]$.
	Moreover, since $\rho = |j|_g$ the inequalities used to establish the dominant energy condition must be equalities.
	This yields the scalar curvature equality as well as $|\upd s|_g = 1$ whenever $h^\prime \circ s \neq 0$.
	Since $-j^\sharp$ and $\tilde{\nu}$ point in the same direction, $\upd t = \Phi^*(\upd s)$ holds where $h^\prime \circ s \neq 0$.
	
	Second, $0 = \chi^+(X,Y) = \gamma_t(\nabla_X \tilde{\nu}, Y) - h_t \cdot \gamma_t(X,Y)$ for all $X,Y \in TF_t$ implies
	\begin{gather*}
		\frac{\del}{\del t}\gamma_t = 2\gamma_t(\nabla \tilde{\nu}, -) = 2h_t \cdot \gamma_t.
	\end{gather*}
	The unique solution of this ordinary differential equation starting at $\gamma_0 = \gamma$ is given by $\gamma_t = v(t)^2 \gamma$ with $v(t) = \exp(\int_0^t h_\tau \upd \tau)$.
	Moreover, since $\chi^+ = 0$ implies $\theta^+ = 0$, the inequalities for $H^g$ along $\del M$ turn into equalities.
	
	Third, the metrics on the leaves admit a non-trivial parallel spinor.
	In particular, this applies to $\gamma$, which was left to show.
\end{proof}

\section{Identifying the product structure} \label{Sec:ProdStr}
This section is devoted to a proof of \cref{Thm:ProdStr}.
Throughout, $M$ will be a compact manifold with boundary $\del M = \del_+ M \dotcup \del_- M$, where $\del_+ M$ and $\del_-M$ are unions of components.
Moreover, we will assume that $M$ is connected and that $\del_+ M \neq \emptyset$.
For a manifold with boundary, as usual, notions such as diffeomorphism, foliation etc.\ should always be understood in the sense that there exists a smooth extension along a small collar neighborhood around the boundary.
Notice that with this notion, the leaves of the foliation of $M$ are not necessarily homeomorphic:
Even for two diffeomorphic leaves of the collar-extended manifold, their portions lying inside $M$ may be topologically different.

\begin{Satz} \label{Thm:FlowFol}
Let $M$ be as above and $X$ a smooth vector field that is transverse to the boundary, inward-pointing on $\del_+ M$ and outward-pointing on $\del_- M$.
We denote by
\begin{align*}
	\Phi \colon \mathcal{D} \to M,
\end{align*}
the flow of $X$, defined on the maximal domain of definition $\mathcal{D} \subseteq M \times \R$.
Then $\Phi$ restricts to a diffeomorphism
\begin{align*}
	\Phi^\prime \colon \mathcal{D}^\prime \coloneqq (\del_+ M \times [0,\infty)) \cap \mathcal{D} \to M^\prime.
\end{align*}
onto an open subset $M^\prime \subseteq M$, and the smooth function $f \coloneqq \mathrm{pr}_\R \circ {\Phi^\prime}^{-1} \colon M^\prime \to \R$ is proper.
Moreover, if the foliation $(f^{-1}(t))_{t \in \R}$ of $M^\prime$ extends to a foliation by hypersurfaces of $M$ such that $X$ is transversal to its leaves, then $M^\prime = M$.
\end{Satz}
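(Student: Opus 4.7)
The plan is to handle the three assertions of the theorem in order, with the main obstacle lying in the last part.

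For the diffeomorphism claim, I would first verify that $\Phi'$ is a local diffeomorphism on $\mathcal{D}'$. At a boundary point $(p,0)$ with $p \in \partial_+ M$, its differential decomposes as $(v, \lambda) \in T_p \partial_+ M \oplus \mathbb{R} \mapsto v + \lambda X(p) \in T_p M$, and this is a boundary-respecting isomorphism because $X(p)$ is inward-pointing and hence transverse to $T_p \partial_+ M$. For $t > 0$, the differential is an isomorphism thanks to the flow property on the interior. Injectivity is then the crucial step: supposing $\Phi(p_1, t_1) = \Phi(p_2, t_2)$ with $0 \leq t_1 \leq t_2$, flowing backward yields $\Phi(p_2, t_2 - t_1) = p_1 \in \partial_+ M$. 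However, an orbit of $X$ starting at a point of $\partial_+ M$ immediately enters the interior of $M$ (as $X$ is inward-pointing) and cannot return to $\partial M$ except by exiting through $\partial_- M$: the orbit cannot touch $\partial_+ M$ again from the interior since its first-order behavior would contradict $X$ being inward there. Hence $t_2 - t_1 = 0$, and then $p_1 = p_2$ by injectivity of the time-$t_1$ flow. Combined with the local diffeomorphism property, $\Phi'$ is a diffeomorphism onto its open image $M' \subseteq M$.

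For properness of $f$, using that $f \geq 0$, it suffices to show $f^{-1}([0, R])$ is compact for every $R > 0$. I would express it as $\Phi'(\mathcal{D}' \cap (\partial_+ M \times [0, R]))$ and introduce the exit-time function $T(p) = \sup\{t \geq 0 : (p, t) \in \mathcal{D}\} \in (0, \infty]$. Continuous dependence of the flow on initial conditions together with transversality of the exit through $\partial_- M$ renders $T$ continuous, so $\mathcal{D}' \cap (\partial_+ M \times [0, R]) = \{(p, t) : 0 \leq t \leq \min(T(p), R)\}$ is a closed subset of the compact $\partial_+ M \times [0, R]$, whose continuous image under $\Phi'$ is therefore compact.

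For the conditional $M' = M$ claim, suppose for contradiction $M' \subsetneq M$. Connectedness of $M$ produces $y \in \overline{M'} \setminus M'$, and for $x_n \to y$ with $x_n \in M'$, properness of $f$ forces $f(x_n) \to \infty$ (otherwise the limit would lie in some compact $f^{-1}([0, R]) \subseteq M'$, contradicting $y \notin M'$). Both $M'$ and $M \setminus M'$ are invariant under the flow of $X$, forward-invariance of $M'$ being immediate from its definition and backward-invariance following from the injectivity argument above. In an adapted flow-box chart $U \cong L \times (-\epsilon, \epsilon)$ around $y$ with $X = \partial_s$ and leaves $\{s = \mathrm{const}\}$, flow-invariance gives $M' \cap U = B \times (-\epsilon, \epsilon)$ for some open $B \subseteq L$, and $f(\ell, s) = f(\ell, 0) + s$ there, with $f(\cdot, 0)$ locally constant on connected components of $B$. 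The local product structure of the flow near $\partial_+ M$ then forces $\partial_+ M$ itself to be a leaf of the extended foliation $\mathcal{F}$. I would then define a closed $1$-form $\alpha$ on $M$ by $\ker \alpha = T\mathcal{F}$ and $\alpha(X) = 1$, which satisfies $\alpha|_{M'} = df$ and $\alpha|_{\partial_+ M} = 0$, and attempt to produce a global primitive $\tilde f$ of $\alpha$ on $M$ extending $f$ with $\tilde f|_{\partial_+ M} = 0$. The hard part is establishing exactness of $\alpha$ on all of $M$: I expect this to follow from a cohomological argument exploiting that $\partial_+ M$ is a leaf (so periods of $\alpha$ on loops in $\partial_+ M$ vanish), combined with a Mayer--Vietoris-style argument reducing general loops in $M$ to loops on $\partial_+ M$ modulo segments along $X$-orbits. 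Once $\tilde f$ is in hand, the backward orbit of $y$ strictly decreases $\tilde f$ at unit rate, and by compactness of $M$ must reach $\tilde f = 0 = \partial_+ M$ in finite time, forcing $y \in M'$ and giving the desired contradiction.
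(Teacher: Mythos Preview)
Your arguments for the first two assertions are essentially correct and close in spirit to the paper's (the paper phrases properness via a sequential compactness argument rather than an exit-time function, but the content is the same).

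The third part, however, has a genuine gap. You define $\alpha$ by the conditions $\ker\alpha = T\mathcal{F}$ and $\alpha(X)=1$ and then assert that $\alpha$ is closed. This is not justified by the hypotheses: a codimension-one foliation transverse to a nowhere-vanishing vector field is always given by \emph{some} nonvanishing $1$-form, but that form need not be closed---integrability only gives $\alpha\wedge d\alpha=0$. On $M'$ you do have $\alpha=df$, but on $M\setminus M'$ nothing in the statement forces $d\alpha=0$. (In the paper's application, \cref{Thm:ProdStr}, one \emph{does} have a globally closed form $U^\flat$, but the theorem you are asked to prove here does not assume this.) Consequently the entire programme of finding a global primitive $\tilde f$ and running the backward-flow argument collapses at its first step; and even granting closedness, your sketch for exactness via ``Mayer--Vietoris reducing loops to $\partial_+M$'' is not a proof, since you have no control over $\pi_1(M)$ relative to $\pi_1(\partial_+M)$.

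The paper's route avoids all of this global cohomological machinery. It simply shows that $M'$ is \emph{closed} in $M$ by a local argument: around any $x\in\overline{M'}$ take a foliation chart $U\cong(-r,r)^{n-1}\times(-\delta,\delta)$ adapted to $\mathcal{F}$. Pick $y\in M'\cap U$ and look at its leaf-plaque. The intersection $M'\cap U\cap\{\text{plaque through }y\}$ is open (since $M'$ is open) and closed (since it equals $f^{-1}(f(y))\cap U$ and $f$ is proper), hence is the whole plaque; now the $X$-flow line through $x$, being transverse to the leaves, meets that plaque, so $x\in M'$. This uses only properness of $f$ and the local product structure of the foliation---no closed forms, no exactness, no global primitive.
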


Note that the additional condition of $X$ being transversal to some foliation by hypersurfaces forces the vector field $X$ to be nowhere vanishing, which already rules out many pathological examples.
Yet $X$ being nowhere vanishing is not sufficient for the full conclusion:
Even in nice cases $M^\prime$ might not be all of $M$ as the following example shows.

\begin{Bsp}
	Consider $M = S^1 \times [-\ell,\ell]$ with $\del_+ M = S^1 \times \{\ell\}$ and $\del_- M = S^1 \times \{-\ell\}$ and the vector field $X(s,r) = \frac{\del}{\del s} - r^2 \frac{\del}{\del r}$, $s \in S^1, r \in [-\ell, \ell]$.
	In this case $M^\prime = S^1 \times (0, \ell]$ and $f(s,r) = \frac{1}{r}-\frac{1}{\ell}$.
	Since $f$ is independent on $s$, the foliation $(f^{-1}(t))_{t \in \R}$ of $M^\prime$ extends to the canonical foliation of $M$.
	But note that $X$ is not transversal to the leaf $S^1 \times \{0\}$.
\end{Bsp}

\begin{proof}
Let $\hat{M}$ be the manifold (without boundary) that arises by adding collar neighborhoods to $M$, $\hat{X}$ a smooth extension of $X$ to $\hat{M}$ and $\hat{\Phi} \colon \hat{\mathcal{D}} \to \hat{M}$ be the flow of $\hat{X}$ defined on the maximal domain of definition.
Note that $\Phi$ is the restriction of $\hat{\Phi}$ to $\hat{\Phi}^{-1}(M) \cap (M \times \R)$.
This uses that the vector field $X$ is transversal to the boundary, so that flow lines of $\hat{X}$ cannot re-enter $M$.

We start by showing that $\Phi^\prime$, or rather $\hat{\Phi}^\prime \coloneqq \hat{\Phi}_{|\hat{\mathcal{D}}^\prime}$ for $\hat{\mathcal{D}}^\prime = (\del_+ M \times \R) \cap \hat{\mathcal{D}}$, is a local diffeomorphism.
By definition, for $x \in \del_+ M$, $Y \in T_x\del_+ M$ and $a \in \R$ the differential of $\hat{\Phi}^\prime$ is given by $\upd_x \hat{\Phi}^\prime(Y+a\frac{\del}{\del t}) = Y + aX$.
As $X$ is transversal to $\del_+ M$, this differential is an isomorphism and $\hat{\Phi}^\prime$ is a local diffeomorphism on a neighborhood of $\del_+ M \times \{0\}$ in $\hat{\mathcal{D}}^\prime$.
We now consider arbitrary points $(x,t) \in \hat{\mathcal{D}}^\prime$.
Using that, locally around $(x,0)$, $\hat{\Phi}(\blank,t)$ is a diffeomorphism (with inverse $\hat{\Phi}(\blank,-t)$) and the factorization $\hat{\Phi}^\prime=\hat{\Phi}(\blank,t) \circ \hat{\Phi}^\prime \circ ((y,s) \mapsto (y,s-t))$, we conclude that $\hat{\Phi}^\prime$ is also a local diffeomorphism around $(x,t)$.

For the first part of the claim it is now sufficient to see that $\hat{\Phi}^\prime$ is injective.
Then $\hat{\Phi}^\prime$ is a diffeomorphism onto its image and $\Phi^\prime$ its restriction to $\mathcal{D}^\prime$.
Suppose that $\hat{\Phi}^\prime(x,t) = \hat{\Phi}^\prime(y,s)$ and $t \leq s$.
Then $x=\hat{\Phi}(\blank,-t)\circ \hat{\Phi}^\prime(x,t) = \hat{\Phi}^\prime(y,s-t)$.
As $X$ is inward-pointing at $x \in \del_+ M$, this implies $s-t=0$ and $x=\hat{\Phi}^\prime(y,0)=y$.

We now show that the subsets $f^{-1}([a,b])$ are compact for any real numbers $a \leq b$.
Let $(x_i)_{i \in \N}$ be a sequence in $f^{-1}([a,b]) \subseteq M^\prime$.
As $\del_+ M \times [a,b]$ is compact, we may assume without loss of generality that $(y_i, c_i) \coloneqq (\Phi^\prime)^{-1}(x_i) \lto (y,c)$ for some $(y,c) \in \del_+ M \times [a,b]$.
We have to show that $(y,c) \in \mathcal{D}^\prime$.
Then $x = \Phi(y,c) \in M^\prime$ is the limit of the sequence $(x_i)_{i \in \N}$.

Suppose for contradiction that $(y,c) \not\in \mathcal{D}$.
Let $d$ be the maximal value so that $(y,d) \in \mathcal{D}$.
Since $M$ is compact, this maximum exists.
Furthermore, we may choose an $\epsilon > 0$ smaller than $c-d$ such that $(y, d+\epsilon) \in \hat{\mathcal{D}}$ with $\hat{\Phi}(y, d+\epsilon) \in \hat{M} \setminus M$.
Thus $d+\epsilon \leq c_i$ for almost all $i \in \N$, so that $(y_i, d + \epsilon) \in \mathcal{D}$ and $\Phi(y_i, d + \epsilon) \in M$.
But since $M$ is compact, thus closed in $\hat{M}$, we deduce from $\hat{\Phi}(y_i, d + \epsilon) \lto \hat{\Phi}(y, d + \epsilon)$ for $i \lto \infty$ that $\hat{\Phi}(y, d + \epsilon) \in M$, contradiction.

For the last part, we show that $M^\prime$ is closed in $M$ and invoke that $M$ is connected.
So let $x \in \overline{M^\prime}$.
We may assume without loss of generality that $x$ is in the interior of $M$:
If $x \in \del_+ M$, there is nothing to show as $x \in M^\prime$ by definition, and if $x \in \del_- M$, we may argue with $\Phi(x,-\epsilon) \in M \setminus \del M$ for a small $\epsilon > 0$ instead as this will be in $\overline{M^\prime}$ if $x$ is.
By assumption, there is a codimension one foliation $\mathcal{F}$ of $M$ that is transversal to $X$ and such that its leaves are level sets of $f$ wherever $f$ is defined.
We may choose a chart $\psi \colon U \to (-r,r)^{n-1} \times (-\delta, \delta)$ of $M$ around $x$ so that the leaves of $\mathcal{F}$ correspond to level sets of the last component.
After potentially shrinking $\delta$ the image of the flow line $\Phi(x,\blank) \colon (a,b) \to U$ through $x$ crosses every level set.
As $x \in \overline{M^\prime}$, there is some $y \in M^\prime \cap U$.
We consider the level set $(-r, r)^{n-1} \times \{t\}$ containing $\psi(y)$.
Within this set, $\psi(f^{-1}(\{f(y)\}) \cap U) = \psi(M^\prime \cap U) \cap ((-r, r)^{n-1} \times \{t\})$ is both open and closed, where the latter follows from properness of $f$.
Thus this whole level set is contained in $\psi(M^\prime \cap U)$.
In particular, a point in the flow line of $x$ is contained in $M^\prime$.
But then $x \in M^\prime$ by definition.
\end{proof}

\begin{proof}[Proof of \cref{Thm:ProdStr}]
We invoke \cref{Thm:FlowFol} for the vector field $X = -\bafrac{U}{|U|_g^2}$.
In order to make use of its full strength, we show that the foliation defined by the closed $1$-form $U^\flat$ extends (or actually coincides with) the foliation $(f^{-1}(t))_{t \in \R}$.
Since $U^\flat(X) = -1 \neq 0$, $X$ is transversal to the leaves of this foliation.

So let $Y$ be a vector in $Tf^{-1}(t)$ for some $t \in \R$.
We have to show that $g(U,Y)=0$.
We pull $Y$ back to $\mathcal{D}^\prime$ along $\Phi^\prime$ and extend this to a vector field on $\mathcal{D}^\prime$ in such a way that it is constant in the $\R$-direction.
Then the pushed-forward vector field $\widetilde{Y}$ on $M^\prime$ extends $Y$, is tangential to the foliation $(f^{-1}(t))_{t \in \R}$ everywhere and satisfies $[\widetilde{Y},X]=0$.

Since $g(U,\widetilde{Y})=0$ on $\del_+ M$ and every point in $M^\prime$ can be reached from there via a flow line of $X$ it suffices to show that $\del_X g(U, \widetilde{Y}) = 0$.
Since
\begin{align*}
[\widetilde{Y},X] &= -\left(\del_{\widetilde{Y}} \bafrac{1}{|U|_g^2}\right) U - \bafrac{1}{|U|_g^2} [\widetilde{Y},U] \\
	&= \bafrac{2 g(\nabla_{\widetilde{Y}} U, U)}{|U|_g^4} U - \bafrac{1}{|U|_g^2} [\widetilde{Y},U],
\end{align*}
the condition $[\widetilde{Y},X] = 0$ is equivalent to $2g(\nabla_{\widetilde{Y}} U, U)\, U = |U|_g^2 \, [\widetilde{Y},U]$, which implies
\begin{align*}
2g(\nabla_{\widetilde{Y}} U, U) &= g([\widetilde{Y},U],U).
\end{align*}
Note, moreover, that the condition $\upd U^\flat = 0$ is equivalent to
\begin{align*}
g(\nabla_A U, B) &= g(\nabla_B U, A)
\end{align*}
for any vectors $A$ and $B$.
Taking this together, we obtain the desired equation
\begin{align*}
-|U|_g^2 \del_X g(U, \widetilde{Y}) &= \del_U g(U, \widetilde{Y}) \\
	&= g(\nabla_U U, \widetilde{Y}) + g(U, \nabla_U \widetilde{Y}) \\
	&= g(\nabla_{\widetilde{Y}} U, U) + g(U, \nabla_{\widetilde{Y}} U) - g(U, [\widetilde{Y}, U]) \\
	&= 0.
\end{align*}

The previous theorem now establishes that $\Phi^\prime \colon \mathcal{D}^\prime \to M$ is a diffeomorphism and that $U$ is orthogonal to all the level sets of $f$.
Since $M$ and thus $\mathcal{D}^\prime$ is compact, there exists a maximal number $\ell \geq 0$ such that $\del_+ M \times \{\ell\}$ is contained in $\mathcal{D}^\prime$.
Maximality of $\ell$ implies that there exists some point $\Phi^\prime(x,\ell)$ that lies in $\del_- M$.
In particular, $\ell > 0$.
As $U$ is orthogonal to $\del_- M$, the connected component of $\del_- M$ that contains $\Phi^\prime(x,\ell)$ is a component of a leaf of the foliation defined by $U^\flat$, i.\,e.\ a component of the level set $f^{-1}(\ell)$.
But since $M$ and thus $\mathcal{D}^\prime \cong M$ is connected, also $\del_+ M$ and $f^{-1}(\ell) = \Phi^\prime(\del_+ M \times \{\ell\})$ are connected.
Thus $f^{-1}(\ell)$ is a component of $\del_- M$.
Since flow lines end when they reach $\del_- M$, this implies that $\mathcal{D}^\prime = \del_+ M \times [0,\ell]$.
\end{proof}

\end{document}